\newtheorem*{theoA}{Theorem A}
\newtheorem*{theoB}{Theorem B}
\newtheorem*{theoC}{Theorem C}
\newtheorem*{theoD}{Theorem D}
\newtheorem*{theoE}{Theorem E}
\newtheorem*{conjA}{Conjecture A}
\newtheorem*{conjB}{Conjecture B}
\newtheorem{theo}{Theorem}[section]
\newtheorem{lem}{Lemma}[section]
\newtheorem{exm}{Example}[section]
\newtheorem*{quesA}{Question A}
\newtheorem{conj}{Conjecture}[section]
\newcommand{\ol}{\overline}
\newcommand{\be}{\begin{equation}}
\newcommand{\ee}{\end{equation}}
\newcommand{\beas}{\begin{eqnarray*}}
\newcommand{\eeas}{\end{eqnarray*}}
\newcommand{\bea}{\begin{eqnarray}}
\newcommand{\eea}{\end{eqnarray}}
\numberwithin{equation}{section}
\begin{document}

\title[B\MakeLowercase{r\"{u}ck conjecture for solutions of first-order partial differential equations}......]{\LARGE B\Large\MakeLowercase{r\"{u}ck conjecture for solutions of first-order partial differential equations in} $\mathbb{C}^m$}

\date{}
\author[S. M\MakeLowercase{ajumder}, N. S\MakeLowercase{arkar} \MakeLowercase{and} D. P\MakeLowercase{ramanik}]{S\MakeLowercase{ujoy} M\MakeLowercase{ajumder}$^*$, N\MakeLowercase{abadwip} S\MakeLowercase{arkar} \MakeLowercase{and} D\MakeLowercase{ebabrata} P\MakeLowercase{ramanik}}
\address{Department of Mathematics, Raiganj University, Raiganj, West Bengal-733134, India.}
\email{sm05math@gmail.com}
\address{Department of Mathematics, Raiganj University, Raiganj, West Bengal-733134, India.}
\email{naba.iitbmath@gmail.com}
\address{Department of Mathematics, Raiganj University, Raiganj, West Bengal-733134, India.}
\email{debumath07@gmail.com}

\renewcommand{\thefootnote}{}
\footnote{2020 \emph{Mathematics Subject Classification}: 32A20, 32A22 and 32H30.}
\footnote{\emph{Key words and phrases}: Nevanlinna theory in higher dimensions, partial
derivative, Borel-Caratheodery lemma in several complex variables, Br\"{u}ck conjecture in several complex variables.}
\footnote{\emph{Corresponding Author}: Sujoy Majumder.}

\renewcommand{\thefootnote}{\arabic{footnote}}
\setcounter{footnote}{0}

\begin{abstract} In this paper, we study the Br\"{u}ck conjecture \cite{Bruck-1996} by interpreting it through solutions of first-order partial differential equations in several complex variables. Our results show that the Br\"{u}ck conjecture \cite{Bruck-1996} 
in $\mathbb{C}^m$ holds under certain additional conditions. In pursuit of this objective, we also establish a Borel-Caratheodory theorem in $\mathbb{C}^m$ and derive several fundamental results on the order and hyper-order of entire functions in higher dimensions.
\end{abstract}

\thanks{Typeset by \AmS -\LaTeX}
\maketitle

\section{{\bf Introduction}}
We define $\mathbb{Z}_+=\mathbb{Z}[0,+\infty)=\{n\in \mathbb{Z}: 0\leq n<+\infty\}$ and $\mathbb{Z}^+=\mathbb{Z}(0,+\infty)=\{n\in \mathbb{Z}: 0<n<+\infty\}$.
On $\mathbb{C}^m$, we define
\begin{align*}
\partial_{z_i}=\frac{\partial}{\partial z_i},\ldots, \partial_{z_i}^{l_i}=\frac{\partial^{l_i}}{\partial z_i^{l_i}}\;\;\text{and}\;\;\partial^{I}=\frac{\partial^{|I|}}{\partial z_1^{i_1}\cdots \partial z_m^{i_m}}
\end{align*}
where $l_i\in \mathbb{Z}^+\;(i=1,2,\ldots,m)$ and $I=(i_1,\ldots,i_m)\in\mathbb{Z}^m_+$ be a multi-index such that $|I|=\sum_{j=1}^m i_j$.
\smallskip

We firstly recall some basic notions in several complex variables (see \cite{HLY,Noguchi-Winkelmann,WS}).
On $\mathbb{C}^m$, the exterior derivative $d$ splits $d= \partial+ \bar{\partial}$ and twists to $d^c= \frac{\iota}{4\pi}\left(\bar{\partial}- \partial\right)$. Clearly $dd^{c}= \frac{\iota}{2\pi}\partial\bar{\partial}$. A non-negative function $\tau: \mathbb{C}^m\to \mathbb{R}[0,b)\;(0<b\leq \infty)$ of class $\mathbb{C}^{\infty}$ is said to be an exhaustion of $\mathbb{C}^m$ if $\tau^{-1}(K)$ is compact whenever $K$ is. 
An exhaustion $\tau_m$ of $\mathbb{C}^m$ is defined by $\tau_m(z)=||z||^2$. The standard Kaehler metric on $\mathbb{C}^m$ is given by $\upsilon_m=dd^c\tau_m>0$. On $\mathbb{C}^m\backslash \{0\}$, we define $\omega_m=dd^c\log \tau_m\geq 0$ and $\sigma_m=d^c\log \tau_m \wedge \omega_m^{m-1}$. For any $S\subseteq \mathbb{C}^m$, let $S[r]$, $S(r)$ and $S\langle r\rangle$ be the intersection of $S$ with respectively the closed ball, the open ball, the sphere of radius $r>0$ centered at $0\in \mathbb{C}^m$.

\smallskip
Let $f$ be a holomorphic function on $G(\not=\varnothing)$, where $G$ is an open subset of $\mathbb{C}^m$. Then we can write $f(z)=\sum_{i=0}^{\infty}P_i(z-a)$, where the term $P_i(z-a)$ is either identically zero or a homogeneous polynomial of degree $i$. Certainly the zero multiplicity $\mu^0_f(a)$ of $f$ at a point $a\in G$ is defined by $\mu^0_f(a)=\min\{i:P_i(z-a)\not\equiv 0\}$.

\medskip
Let $f$ be a meromorphic function on $G$. Then there exist holomorphic functions $g$ and $h$ such that $hf=g$ on $G$ and $\dim_z h^{-1}(\{0\})\cap g^{-1}(\{0\})\leq m-2$. Therefore the $c$-multiplicity of $f$ is just $\mu^c_f=\mu^0_{g-ch}$ if $c\in\mathbb{C}$ and $\mu^c_f=\mu^0_h$ if $c=\infty$. The function $\mu^c_f: \mathbb{C}^m\to \mathbb{Z}$ is nonnegative and is called the $c$-divisor of $f$. If $f\not\equiv 0$ on each component of $G$, then $\nu=\mu_f=\mu^0_f-\mu^{\infty}_f$ is called the divisor of $f$. We define 
$\text{supp}\; \nu=\text{supp}\;\mu_f=\ol{\{z\in G: \nu(z)\neq 0\}}$.

For $t>0$, the counting function $n_{\nu}$ is defined by
\begin{align*}
 n_{\nu}(t)=t^{-2(m-1)}\int_{A[t]}\nu \upsilon_m^{m-1},
 \end{align*}
where $A=\text{supp}\;\nu$. The valence function of $\nu$ is defined by 
\begin{align*}
N_{\nu}(r)=N_{\nu}(r,r_0)=\int_{r_0}^r n_{\nu}(t)\frac{dt}{t}\;\;(r\geq r_0).
\end{align*}

For $a\in\mathbb{P}^1$, we write $n_{\mu_f^a}(t)=n(t,a;f)$, if $a\in\mathbb{C}$ and $n_{\mu_f^a}(t)=n(t,f)$, if $a=\infty$. Also we write $N_{\mu_f^a}(r)=N(r,a;f)$ if $a\in\mathbb{C}$ and $N_{\mu_f^a}(r)=N(r,f)$ if $a=\infty$.
For $k\in\mathbb{N}$, define the truncated multiplicity functions on $\mathbb{C}^m$ by $\mu_{f,k}^a(z)=\min\{\mu_f^a(z),k\}$,
and write the truncated counting functions $n_{\nu}(t)=n_k(t,a;f)$, if $\nu=\mu_{f,k}^a$ and $n_{\nu}(t)=\ol{n}(t,a;f)$, if $\nu=\mu_{f,1}^a$. Also we write
 $N_{\nu}(t)=N_k(t,a;f)$, if $\nu=\mu_{f,k}^a$ and $N_{\nu}(t)=\ol{N}(t,a;f)$, if $\nu=\mu_{f,1}^a$.

\medskip
With the help of the positive logarithm function, we define the proximity function of $f$ by
\begin{align*}
m(r, f)=\mathbb{C}^m\langle r; \log^+ | f | \rangle=\int_{\mathbb{C}^m\langle r\rangle} \log^+ |f|\;\sigma_m.
\end{align*}

The characteristic function of $f$ is defined by $T(r, f)=m(r,f)+N(r,f)$. We define $m(r,a;f)=m(r,f)$ if $a=\infty$ and $m(r,a;f)=m(r,1/(f-a))$ if $a$ is finite complex number. Now if $a\in\mathbb{C}$, then the first main theorem of Nevanlinna theory states that $m(r,a;f)+N(r,a;f)=T(r,f)+O(1)$, where $O(1)$ denotes a bounded function when $r$ is sufficiently large. We define the order and the hyper-order of $f$ by
\begin{align*}
\rho(f):=\limsup_{r \rightarrow \infty} \frac{\log T(r, f)}{\log r}\;\text{and}\;\rho_1(f):=\limsup_{r \rightarrow \infty} \frac{\log \log T(r, f)}{\log r}.
\end{align*}

Let $S(f)=\{g:\mathbb{C}^m\to\mathbb{P}^1\;\text{meromorphic}:\parallel T(r,g)=o(T(r,f))\}$, where $\parallel$ indicates that the equality holds only outside a set of finite measure on $\mathbb{R}^+$ and the element in $S(f)$ is called the small function of $f$.

Let $f$, $g$ and $a$ be meromorphic functions on $\mathbb{C}^m$. Then one can find three pairs
of entire functions $f_1$ and $f_2$, $g_1$ and $g_2$, and $a_1$ and $a_2$, in which each pair is coprime
at each point in $\mathbb{C}^m$ such that $f = f_2/f_1$, $g=g_2/g_1$ and $a = a_2/a_1$.
We say that $f$ and $g$ share $a$ by counting multiplicities (CM) if $\mu_{a_1f_2-a_2f_1}^0=\mu_{a_1g_2-a_2g_1}^0\;(a\not\equiv \infty)$ and $\mu_{f_1}^0=\mu_{g_1}^0\;\;(a=\infty)$. 

\medskip
Rubel and Yang \cite{Rubel-Yang} first considered the uniqueness of an entire function in $\mathbb{C}$ when it shares two values CM with its first derivative. In 1977 they proved:

\begin{theoA}\cite{Rubel-Yang} Let $f$ be a non-constant entire function in $\mathbb{C}$ and let $a$ and $b$ be two distinct finite complex numbers. If $f$ and $f^{(1)}$ share $a$ and $b$ CM, then $f\equiv f^{(1)}$.
\end{theoA}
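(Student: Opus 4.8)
The plan is to translate the two counting‑multiplicity conditions into exponential relations, eliminate $f^{(1)}$ so that $f$ becomes a rational expression in two exponentials, bound the growth of those exponentials by the lemma on the logarithmic derivative, and then derive a growth contradiction unless $f\equiv f^{(1)}$.

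First I would use the sharing hypotheses: since $f$ and $f^{(1)}$ are entire and share $a$ CM, the quotient $(f^{(1)}-a)/(f-a)$ is entire and zero‑free, and similarly for $b$; hence there are entire functions $\alpha$, $\beta$ with $f^{(1)}-a=(f-a)e^{\alpha}$ and $f^{(1)}-b=(f-b)e^{\beta}$. If $e^{\alpha}\equiv e^{\beta}$, cross‑multiplying these two identities gives $(a-b)(f^{(1)}-f)\equiv 0$, whence $f\equiv f^{(1)}$ and we are done; so the whole content is to show that $e^{\alpha}\not\equiv e^{\beta}$ is impossible for a non‑constant $f$. Assuming $e^{\alpha}\not\equiv e^{\beta}$, subtracting the two relations removes $f^{(1)}$ and yields
\[
f-a=\frac{(b-a)(1-e^{\beta})}{e^{\alpha}-e^{\beta}},\qquad f-b=\frac{(b-a)(1-e^{\alpha})}{e^{\alpha}-e^{\beta}},
\]
so in particular $T(r,f)\le T(r,e^{\alpha})+T(r,e^{\beta})+O(1)$. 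Moreover, differentiating $f^{(1)}-a=(f-a)e^{\alpha}$ and using the relation itself to eliminate $f^{(1)}-a$ gives
\[
\alpha'=\frac{(f^{(1)}-a)'}{f^{(1)}-a}-\frac{(f-a)'}{f-a},
\]
a difference of logarithmic derivatives, so $m(r,\alpha')=S(r,f)$ and, $\alpha'$ being entire, $T(r,\alpha')=S(r,f)$; similarly $T(r,\beta')=S(r,f)$.

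Next I would feed the rational expression for $f$ back into $f^{(1)}-a=(f-a)e^{\alpha}$. A direct computation collapses this to the single identity
\[
e^{\alpha}(1-e^{\beta})\bigl(e^{\alpha}-e^{\beta}+\alpha'\bigr)+\beta' e^{\beta}\bigl(e^{\alpha}-1\bigr)=\frac{a}{a-b}\,(e^{\alpha}-e^{\beta})^{2},
\]
a polynomial relation in $e^{\alpha},e^{\beta}$ whose remaining coefficients are constants or elements of $S(f)$. Viewing it (say) as a quadratic equation in $e^{\alpha}$ with coefficients built from $e^{\beta},\alpha',\beta'$, and treating the handful of degenerate configurations by hand, one should be able to conclude that $e^{\alpha}-e^{\beta}\in S(f)$; then, differentiating $e^{\alpha}-e^{\beta}$ and solving the $2\times2$ linear system
\[
e^{\alpha}-e^{\beta}=w,\qquad \alpha'e^{\alpha}-\beta'e^{\beta}=w',
\]
(where $w:=e^{\alpha}-e^{\beta}$, so that $w,w'\in S(f)$) together with $T(r,\alpha'),T(r,\beta')=S(r,f)$ gives $T(r,e^{\alpha})=T(r,e^{\beta})=S(r,f)$. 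Substituting into $T(r,f)\le T(r,e^{\alpha})+T(r,e^{\beta})+O(1)$ forces $T(r,f)=S(r,f)$, which is impossible for a non‑constant entire function. Hence $e^{\alpha}\equiv e^{\beta}$, i.e. $f\equiv f^{(1)}$.

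The step I expect to be the main obstacle is wringing out of the polynomial identity the information that $e^{\alpha}$ and $e^{\beta}$ are small functions of $f$: a purely algebraic reading of that identity does not suffice, and this is precisely where the CM hypothesis (via $T(r,\alpha'),T(r,\beta')=S(r,f)$) has to be used, possibly together with the first and second main theorems. It is also the place where the degenerate sub‑cases $\alpha'\equiv 0$, $\beta'\equiv 0$, $\alpha'\equiv\beta'$, and the possibility that $0\in\{a,b\}$ must be disposed of separately; in this normalisation each of them either returns to $e^{\alpha}\equiv e^{\beta}$ or contradicts $T(r,f)\to\infty$.
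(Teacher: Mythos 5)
This statement is the classical Rubel--Yang theorem, which the paper only quotes from \cite{8} and does not prove, so there is no in-paper argument to compare with; judged on its own, your proposal is a correct set-up but not a proof. The algebra is fine: the exponential relations $f^{(1)}-a=(f-a)e^{\alpha}$, $f^{(1)}-b=(f-b)e^{\beta}$, the reduction of the case $e^{\alpha}\equiv e^{\beta}$ to $f\equiv f^{(1)}$, the elimination formulas for $f-a$ and $f-b$, the estimate $T(r,\alpha')+T(r,\beta')=S(r,f)$, and the displayed identity in $e^{\alpha},e^{\beta}$ all check out. The gap is exactly where you flag it: the assertion that from this identity ``one should be able to conclude that $e^{\alpha}-e^{\beta}\in S(f)$'' is the entire content of the theorem, not a routine consequence. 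Since $T(r,f)\le T(r,e^{\alpha})+T(r,e^{\beta})+O(1)$ forces at least one of $e^{\alpha},e^{\beta}$ to be non-small, showing that $e^{\alpha}-e^{\beta}$ (and then, via your linear system, both exponentials) lies in $S(f)$ is equivalent to the contradiction you are after; as written, the plan restates the goal at its critical point rather than deriving it. The identity is a polynomial relation in $e^{\alpha},e^{\beta}$ with small coefficients involving the monomials $e^{2\alpha}, e^{\alpha+\beta}, e^{2\alpha+\beta}, e^{\alpha+2\beta},\dots$, and extracting smallness from such a relation needs a genuine analytic input (a Borel-type theorem on linear combinations of exponentials of entire functions with small coefficients, a Clunie/Tumura--Clunie-type lemma, or the second main theorem applied to a well-chosen auxiliary function, which is how the classical proofs of Rubel--Yang and Mues--Steinmetz proceed); none of this is supplied.

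In addition, the degenerate configurations are only named, not handled: in particular $\alpha'\equiv\beta'$ (i.e. $e^{\alpha}=ce^{\beta}$ with $c$ constant) makes your $2\times 2$ system singular and must be resolved by a separate Borel-type argument before the main line can even start, and the sub-cases $\alpha'\equiv 0$, $\beta'\equiv 0$, $0\in\{a,b\}$ are likewise deferred. So the proposal should be regarded as a plausible strategy whose decisive step and case analysis remain to be carried out, not as a proof of Theorem A.
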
 

In the following result, Mues and Steinmetz \cite{Mues-Steinmetz} generalized \emph{Theorem A} from sharing values CM to IM.

\begin{theoB}\cite{Mues-Steinmetz} Let $f$ be a non-constant entire function in $\mathbb{C}$ and let $a$ and $b$ be two distinct finite complex numbers. If $f$ and $f^{(1)}$ share $a$ and $b$ IM, then $f\equiv f^{(1)}$.
\end{theoB}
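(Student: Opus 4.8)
I would prove Theorem B by reducing it to Theorem A, that is, by showing that under its hypotheses $f$ and $f^{(1)}$ in fact share $a$ and $b$ \emph{CM}. First, $f$ must be transcendental: since $a\neq b$, one of $a,b$, say $a$, is nonzero, and if $f$ were a polynomial then at a multiple zero of $f-a$ we would have $f^{(1)}=0\neq a$, contradicting the sharing, so $f-a$ would have $\deg f$ distinct zeros, all of them zeros of $f^{(1)}-a$, which is impossible since $\deg(f^{(1)}-a)<\deg f$. We may also assume $f\not\equiv f^{(1)}$. For $c\in\{a,b\}$ set
\[\phi_c:=\frac{f^{(2)}}{f^{(1)}-c}-\frac{f^{(1)}}{f-c}=\Big(\log\tfrac{f^{(1)}-c}{\,f-c\,}\Big)',\]
so that by the lemma on the logarithmic derivative $m(r,\phi_c)=S(r,f)$ (using also $N(r,f^{(1)})=0$ and $f^{(1)}=f\cdot(f^{(1)}/f)$, whence $T(r,f^{(1)})\leq T(r,f)+S(r,f)$ and $S(r,f^{(1)})=S(r,f)$). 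Here I note two facts: \textup{(i)} $\phi_c\equiv0$ is equivalent to $f^{(1)}-c=\lambda(f-c)$ for a constant $\lambda$, and since $f$ is transcendental $\lambda\neq0$, so solving this linear ODE gives $f=B+Ce^{\lambda z}$ with $C\neq0$, and then $f$ shares $a$ (hence also $b$) with $f^{(1)}=\lambda Ce^{\lambda z}$ only if $B=\tfrac{(\lambda-1)a}{\lambda}=\tfrac{(\lambda-1)b}{\lambda}$, which forces $\lambda=1$ and $f\equiv f^{(1)}$; \textup{(ii)} because $f$ is entire, at a multiple $c$-point of $f$ with $c\neq0$ one would have $f^{(1)}=0\neq c$, against the sharing, so for $c\neq0$ every $c$-point of $f$ is simple and is a $c$-point of $f^{(1)}$ of some order $q=q(z_0)\geq1$.

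From \textup{(ii)} and a local expansion at such a point one finds $\phi_c=\dfrac{q-1}{z-z_0}+O(1)$ there, so $\phi_c$ has at worst a simple pole at each $c$-point of $f$ (and $\phi_c(z_0)=0$ precisely when $q=1$) and no pole off the common zero set of $f-c$ and $f^{(1)}-c$. Consequently, writing $\overline N_{(2}(r,c;f^{(1)})$ for the reduced counting function of the $c$-points of $f^{(1)}$ of multiplicity $\geq2$, we get $N(r,\phi_c)=\overline N_{(2}(r,c;f^{(1)})$, hence $T(r,\phi_c)=\overline N_{(2}(r,c;f^{(1)})+S(r,f)$, while $N(r,1/\phi_c)\geq\overline N(r,c;f)-\overline N_{(2}(r,c;f^{(1)})$; applying the first main theorem to $\phi_c$ yields
\[\overline N(r,c;f)\leq 2\,\overline N_{(2}(r,c;f^{(1)})+S(r,f)\qquad(c=a,b).\]
On the other hand $N(r,f)=0$, so the Second Main Theorem for $f$ with the targets $a,b,\infty$ gives $T(r,f)\leq\overline N(r,a;f)+\overline N(r,b;f)+S(r,f)$, and combining with the above,
\[T(r,f)\leq 2\big(\overline N_{(2}(r,a;f^{(1)})+\overline N_{(2}(r,b;f^{(1)})\big)+S(r,f).\]

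The decisive step — and the one I expect to be the main obstacle — is to show that the right-hand side is in fact $S(r,f)$, i.e.\ that $f^{(1)}$ is essentially unramified over $a$ and $b$. Granting this, the displayed inequality forces $\overline N_{(2}(r,a;f^{(1)})\equiv0\equiv\overline N_{(2}(r,b;f^{(1)})$; but that says precisely that $f$ and $f^{(1)}$ share $a$ and $b$ CM, and then Theorem A gives $f\equiv f^{(1)}$ (alternatively $\phi_a\equiv0$, which already yields $f\equiv f^{(1)}$ by \textup{(i)}). To prove the unramifiedness I would bring in the auxiliary identities $\phi_a+\phi_b=H'/H$, where $H=\dfrac{(f^{(1)}-a)(f^{(1)}-b)}{(f-a)(f-b)}$ is entire (each zero of $(f-a)(f-b)$ being a zero of the matching factor of the numerator), and $(f^{(1)}-a)\phi_a-(f^{(1)}-b)\phi_b=(b-a)\,f^{(1)}G$ with $G=\dfrac{f^{(1)}-f}{(f-a)(f-b)}$ also entire, and play the growths of $f^{(2)}$, of $H$, and of the counting functions of their zeros against $T(r,f)$ via the Second Main Theorem for $f^{(1)}$. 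Finally two degenerate situations must be disposed of: when $0\in\{a,b\}$ a zero of $f$ may be multiple and the corresponding $\phi_0$ then has a simple pole of residue $1$ at every zero of $f$, so the counting above is redone carrying those terms explicitly; and when $f-a$ or $f-b$ is zero-free, which is excluded (or trivially settled) by applying the Second Main Theorem to $f$ once more. Throughout, the real work is quantifying — and ultimately absorbing into $S(r,f)$ — the loss incurred in passing from sharing "IM" to sharing "CM".
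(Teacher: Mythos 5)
You should note at the outset that the paper contains no proof of Theorem~B: it is quoted verbatim from Mues and Steinmetz \cite{10}, so your argument has to stand on its own, and as written it does not. The first problem is a concretely false step. You claim that at a shared $c$-point $z_0$ (with $c\neq 0$) where $f^{(1)}$ has a \emph{simple} $c$-point ($q=1$), the auxiliary function $\phi_c=\frac{f^{(2)}}{f^{(1)}-c}-\frac{f^{(1)}}{f-c}$ vanishes, and you use this to get $N(r,1/\phi_c)\geq \overline{N}(r,c;f)-\overline{N}_{(2}(r,c;f^{(1)})$ and hence the key inequality $\overline{N}(r,c;f)\leq 2\overline{N}_{(2}(r,c;f^{(1)})+S(r,f)$. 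This vanishing is not true: writing $f(z)=c+c(z-z_0)+a_2(z-z_0)^2+a_3(z-z_0)^3+\cdots$ with $f^{(2)}(z_0)=2a_2\neq 0$, a direct expansion gives $\phi_c(z_0)=\frac{3a_3}{2a_2}-\frac{a_2}{c}$, which is generically nonzero. The two simple poles cancel, but nothing forces the constant term to vanish, so the inequality you feed into the Second Main Theorem is unsupported.

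The second, more fundamental, problem is the step you yourself label ``the main obstacle'': showing $\overline{N}_{(2}(r,a;f^{(1)})+\overline{N}_{(2}(r,b;f^{(1)})=S(r,f)$, i.e.\ that IM sharing can be upgraded to CM sharing so that Theorem~A applies. That upgrade is precisely the hard content of the Mues--Steinmetz theorem, and you only offer a plan (``play the growths \dots\ against $T(r,f)$'') rather than an argument; the published proofs do not first establish that $f^{(1)}$ is essentially unramified over $a$ and $b$ and then invoke Rubel--Yang, but instead work directly with entire auxiliary functions such as $\frac{(f^{(1)}-a)(f^{(1)}-b)}{(f-a)(f-b)}$ and $\frac{f^{(1)}-f}{(f-a)(f-b)}$ (which you write down but never exploit), combined with a delicate case analysis. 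Finally, the degenerate case $0\in\{a,b\}$ is only waved at: there the zeros of $f$ may be multiple, your simplicity argument and the pole/zero bookkeeping for $\phi_0$ change (indeed $\frac{f^{(1)}-f}{(f-a)(f-b)}$ need not even be entire at a multiple zero of $f$), and none of this is carried out. So the proposal is an outline that correctly identifies the difficulty, with a flawed counting step, rather than a proof.
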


\smallskip
In recent years, the Nevanlinna value distribution theory in several complex variables has emerged as a prominent and rapidly growing area of research in complex analysis. This field has garnered significant attention due to its deep theoretical insights and wide-ranging applications in mathematics and related disciplines. Researchers have been particularly intrigued by its potential to extend classical results from one complex variable to higher-dimensional settings, as a result, this topic has become a focal point for contemporary studies in several complex variables. These works highlight both theoretical developments and applications in complex geometry, normal families, linear partial differential equations, partial difference equations, partial differential-difference equations, and Fermat-type functional equations. These references \cite{BM1}, \cite{Berenstein-Chang-Li}-\cite{HZ1}, \cite{Hu-Yang-2014}, \cite{IM}-\cite{LZ}, \cite{MD1}, \cite{MDP}, \cite{MS}, \cite{MS1}, \cite{MSP}, \cite{GS2}, \cite{XC1}- \cite{XW1} provide a foundation for understanding the current state of research in Nevanlinna value distribution theory in several complex variables.

Let $f$ be a non-constant entire function in $\mathbb{C}^m$ and 
\begin{align}\label{do2} 
L=D^{(n)}+D^{(n-1)}+\ldots+D^{(1)}+D^{(0)}
\end{align}
be a partial differential operator, where
$D^{(j)}=\sideset{}{_{|I|=j}}{\sum} a_I\partial^{I}$ and $a_I\in S(f)$.

In 1996, Berenstein et. al. \cite{Berenstein-Chang-Li} proved that a non-constant entire function $f$ in $\mathbb{C}^m$ must be a solution of the partial differential equation of $L(w)-w=0$, i.e., $f$ must be identically equal to its partial differential polynomial $L(f)$ if $f$ and $L(f)$ share $a_1$ and $a_2$ CM, where $a_1, a_2\in S(f)$ such that $a_1\not\equiv a_2$. They proved the following result.

\begin{theoC} \cite[Theorem 2.2]{Berenstein-Chang-Li} Let $f$ be a non-constant entire function in $\mathbb{C}^m$ and let $n$ be a positive integer such that $L(f)\not\equiv 0$, where $L$ is defined by (\ref{do2}). If $f$ and $L(f)$ share $a_1$ and $a_2$ CM, where $a_1, a_2\in S(f)$ such that $a_1\not\equiv a_2$, then $f\equiv L(f)$. 
\end{theoC}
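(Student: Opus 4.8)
\medskip
\noindent\textbf{Proof strategy.}
The plan is to carry the classical Rubel--Yang/Mues--Steinmetz argument over to $\mathbb{C}^m$, with the single derivative replaced by $L$; the two genuinely higher-dimensional inputs are the lemma on the logarithmic derivative for partial derivatives, $m(r,\partial^{I}f/f)=S(r,f)$, and a Borel-type lemma for exponential sums in $\mathbb{C}^m$. Set $g:=L(f)$. Since $f$ is entire and each $a_{I}\in S(f)$, the logarithmic derivative lemma gives $N(r,g)=S(r,f)$ and $T(r,g)\le T(r,f)+S(r,f)$, so $T(r,g)=O(T(r,f))$; moreover $g\not\equiv 0$ by hypothesis and $f$ is non-constant. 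Because $f$ and $g$ share $a_{1},a_{2}$ CM with $a_{1},a_{2}\in S(f)$, the quotients $\gamma_{j}:=(g-a_{j})/(f-a_{j})$, $j=1,2$, satisfy $N(r,0;\gamma_{j})+N(r,\infty;\gamma_{j})=S(r,f)$, so by a divisor factorisation on $\mathbb{C}^m$ one may write $\gamma_{j}=\alpha_{j}e^{h_{j}}$ with $\alpha_{j}\in S(f)$ and $h_{j}$ entire; then $T(r,e^{h_{j}})=O(T(r,f))$ and $m(r,\partial^{J}h_{j})=S(r,f)$ for every multi-index $J$.

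From $g=a_{j}+\gamma_{j}(f-a_{j})$ $(j=1,2)$, subtracting gives $(\gamma_{1}-\gamma_{2})f=a_{1}\gamma_{1}-a_{2}\gamma_{2}+(a_{2}-a_{1})$. If $\gamma_{1}\equiv\gamma_{2}$ the right side equals $(a_{1}-a_{2})(\gamma_{1}-1)$, and $a_{1}\not\equiv a_{2}$ forces $\gamma_{1}\equiv 1$, i.e.\ $g\equiv f$, i.e.\ $f\equiv L(f)$, as claimed. So the work is to rule out $\gamma_{1}\not\equiv\gamma_{2}$, in which case
\[
f=\frac{a_{1}\gamma_{1}-a_{2}\gamma_{2}+a_{2}-a_{1}}{\gamma_{1}-\gamma_{2}},\qquad f-a_{1}=\frac{(a_{1}-a_{2})(\gamma_{2}-1)}{\gamma_{1}-\gamma_{2}},\qquad f-a_{2}=\frac{(a_{1}-a_{2})(\gamma_{1}-1)}{\gamma_{1}-\gamma_{2}}.
\]
The idea now is to feed the first formula back into $g=L(f)$. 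Applying any $\partial^{I}$ to a rational function of $\gamma_{1},\gamma_{2}$ produces, via the quotient rule, another rational function of $\gamma_{1},\gamma_{2}$ whose coefficients are polynomials in the logarithmic partial derivatives $\partial^{J}\gamma_{i}/\gamma_{i}$ and in the $a_{j},a_{I}$ and their partial derivatives; by the logarithmic derivative lemma in $\mathbb{C}^m$ each such coefficient $c$ has $m(r,c)=S(r,f)$. Writing $g-a_{1}=\gamma_{1}(f-a_{1})=\dfrac{(a_{1}-a_{2})\gamma_{1}(\gamma_{2}-1)}{\gamma_{1}-\gamma_{2}}$, equating with $L(f)-a_{1}$ computed from the formula for $f$, and clearing the denominator $(\gamma_{1}-\gamma_{2})^{n+1}$, one obtains a polynomial identity $\sum_{(p,q)\in\Lambda}c_{p,q}\gamma_{1}^{\,p}\gamma_{2}^{\,q}\equiv 0$ with $\Lambda\subset\mathbb{Z}_{+}^{2}$ finite and $m(r,c_{p,q})=S(r,f)$, whose part of top total degree $n+2$ equals $(a_{1}-a_{2})\gamma_{1}\gamma_{2}(\gamma_{1}-\gamma_{2})^{n}\not\equiv 0$; so the identity is non-degenerate.

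Substituting $\gamma_{j}=\alpha_{j}e^{h_{j}}$ turns this into an exponential sum $\sum\tilde c_{p,q}e^{ph_{1}+qh_{2}}\equiv 0$ with $m(r,\tilde c_{p,q})=S(r,f)$; since the $\tilde c_{p,q}$ do not all vanish, the Borel-type lemma in $\mathbb{C}^m$ forces $(p-p')h_{1}+(q-q')h_{2}\equiv\text{const}$ for some $(p,q)\ne(p',q')$ in $\Lambda$, i.e.\ $\lambda h_{1}+\mu h_{2}\equiv\text{const}$ for some $(\lambda,\mu)\in\mathbb{Z}^{2}\setminus\{(0,0)\}$. Using this relation one re-expresses $\gamma_{1},\gamma_{2}$, hence $f,\,f-a_{1},\,f-a_{2}$ and $g=L(f)$, through a single exponential $e^{h}$ and small functions, and comparing the two resulting expressions for $g$ yields a second exponential-polynomial identity in $e^{h}$ with small coefficients. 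Solving it as a polynomial in $e^{h}$ shows that either its leading coefficient is $\equiv 0$ --- which, traced through, forces $a_{1}\equiv a_{2}$, a contradiction --- or $e^{h}\in S(f)$, whence $T(r,f)=S(r,f)$, contradicting that $f$ is non-constant; the degenerate sub-cases ($h_{1}$ or $h_{2}$ constant) are handled the same way, the formula for $f$ then acquiring poles on $\{\gamma_{1}=\gamma_{2}\}$ unless $\gamma_{1}\equiv 1$ or $\gamma_{2}\equiv 1$, i.e.\ unless $f\equiv L(f)$. Hence $\gamma_{1}\not\equiv\gamma_{2}$ cannot occur without forcing the conclusion, and $f\equiv L(f)$.

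I expect the decisive difficulty to be the step producing the \emph{non-trivial} polynomial identity in $\gamma_{1},\gamma_{2}$ with \emph{small} coefficients for a general order-$n$ operator $L$: one must align the quotient-rule bookkeeping (for $f=P/(\gamma_{1}-\gamma_{2})$ a single $\partial^{I}$ gives a numerator of degree $\le|I|+1$ over $(\gamma_{1}-\gamma_{2})^{|I|+1}$, so the powers match after clearing $(\gamma_{1}-\gamma_{2})^{n+1}$) with the $\mathbb{C}^m$ logarithmic derivative lemma needed to certify $m(r,c_{p,q})=S(r,f)$, and one then needs a Borel-type lemma for exponential sums with meromorphic small coefficients in several variables. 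A secondary technical point is the factorisation $\gamma_{j}=\alpha_{j}e^{h_{j}}$ with $\alpha_{j}\in S(f)$, which requires a $\mathbb{C}^m$ analogue of the existence of a meromorphic function with prescribed small divisor and controlled growth, together with the reduction of meromorphic coefficients $a_{I}$ to the entire case.
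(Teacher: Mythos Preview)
The paper does \emph{not} prove Theorem~C at all: it is quoted verbatim as background from Berenstein--Chang--Li \cite{1} (their Theorem~2.2) and is used only to motivate the Br\"uck-type problem in $\mathbb{C}^m$. There is therefore no ``paper's own proof'' to compare your attempt against.

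On the merits of your sketch: the overall architecture (form the two quotients $\gamma_j=(L(f)-a_j)/(f-a_j)$, dispose of the case $\gamma_1\equiv\gamma_2$, and in the complementary case express $f$ rationally in $\gamma_1,\gamma_2$ and seek a contradiction) is the right one and is essentially how such two-shared-value results are proved. Two places deserve more care. First, the factorisation $\gamma_j=\alpha_j e^{h_j}$ with $\alpha_j\in S(f)$ presupposes a canonical-product construction in $\mathbb{C}^m$ with controlled characteristic; this is available (Cousin~II on $\mathbb{C}^m$ plus growth estimates), but you should either cite it or bypass it---the argument can be run directly with $\gamma_1,\gamma_2$ since $N(r,\gamma_j)+N(r,1/\gamma_j)=S(r,f)$ already makes them ``units'' for Borel-type reasoning. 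Second, the step ``feed $f=(a_1\gamma_1-a_2\gamma_2+a_2-a_1)/(\gamma_1-\gamma_2)$ back into $g=L(f)$ and clear denominators'' is the real work, and your claim that the top-degree part equals $(a_1-a_2)\gamma_1\gamma_2(\gamma_1-\gamma_2)^n$ is asserted, not shown; for a general order-$n$ operator with small meromorphic coefficients this bookkeeping is delicate. A cleaner route, closer to what one finds in \cite{1}, is to use the algebraic identity
\[
(f-a_1)(f-a_2)\,(\gamma_1-\gamma_2)=(a_1-a_2)\bigl(L(f)-f\bigr),
\]
which follows immediately from the definitions, and then apply a Clunie-type lemma (the paper's Lemma~\ref{L.4}) with $B(f)=(f-a_1)(f-a_2)$ and $P(f)=(a_1-a_2)(L(f)-f)$ to control $m(r,\gamma_1-\gamma_2)$; combined with $T(r,f)\le 2T(r,\gamma_1)+2T(r,\gamma_2)+S(r,f)$ and the second main theorem, this closes the argument without the exponential-sum machinery.
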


Now in the context of sharing one value, the following question creates a new era.
\begin{quesA} What conclusion can be made if $f$ be a non-constant entire function on $\mathbb{C}$ shares only one value with $f^{(1)}$?\end{quesA}

Inspired by \emph{Question A}, in 1996, Br\"{u}ck \cite{Bruck-1996} proposed the following conjecture.
\begin{conjA}\cite{Bruck-1996} Let $f$ be a non-constant entire function in $\mathbb{C}$ such that $\rho_1(f)\not\in\mathbb{N}\cup\{\infty\}$ and $a\in\mathbb{C}$. If $f$ and $f^{(1)}$ share $a$ CM, then 
\begin{align}\label{aa1}
f^{(1)}-a=c(f-a),
\end{align}
where $c$ is a non-zero constant.
\end{conjA}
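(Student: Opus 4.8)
The plan is to recast the CM-sharing hypothesis as a functional equation and then, with the help of a Borel--Carath\'eodory inequality, force the resulting exponential term to be constant. Since $f$ is a non-constant entire function and $f, f^{(1)}$ share $a$ CM, the quotient $(f^{(1)}-a)/(f-a)$ has neither zeros nor poles, so
\[
\frac{f^{(1)}-a}{f-a}=e^{\alpha}
\]
for some entire function $\alpha$, and the conjecture is exactly the statement that $\alpha$ is constant. Taking logarithmic derivatives, $\alpha^{(1)}=\dfrac{f^{(2)}}{f^{(1)}-a}-\dfrac{f^{(1)}}{f-a}$ is entire, so the lemma on logarithmic derivatives gives $T(r,\alpha^{(1)})=S(r,f)$, i.e.\ $\alpha^{(1)}\in S(f)$. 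Writing $\dfrac{f^{(1)}-a}{f-a}=\dfrac{(f-a)^{(1)}}{f-a}-\dfrac{a}{f-a}$ and using the first main theorem gives $T(r,e^{\alpha})=m(r,e^{\alpha})\le T(r,f)+S(r,f)$, so $\rho(e^{\alpha})\le\rho(f)$; feeding this into the Borel--Carath\'eodory theorem, which estimates $M(r,\alpha)$ through $\max_{|z|=R}\operatorname{Re}\alpha(z)$ and hence through $m(R,e^{\alpha})$, yields $M(r,\alpha)\le C\,T(2r,f)+O(1)$ and therefore $\rho(\alpha)\le\rho_{1}(f)$.

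I would first dispose of the case $a=0$ completely. If $f$ and $f^{(1)}$ share $0$ CM, a zero of $f$ of multiplicity $k$ is a zero of $f^{(1)}$ of multiplicity $k-1$, which is incompatible with CM-sharing; hence $f$ is zero-free, $f=e^{\gamma}$ with $\gamma$ entire, and $f^{(1)}=\gamma^{(1)}e^{\gamma}$. Since $f,f^{(1)}$ share $0$ CM, $\gamma^{(1)}$ is also zero-free, so $\gamma^{(1)}=e^{\delta}$ with $\delta$ entire; then $\rho_{1}(f)=\rho_{1}(e^{\gamma})=\rho(\gamma)=\rho(\gamma^{(1)})=\rho(e^{\delta})$, which lies in $\mathbb{N}\cup\{\infty\}$ unless $\delta$ is constant. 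As $\rho_{1}(f)\notin\mathbb{N}\cup\{\infty\}$, $\delta$ is constant, $\gamma^{(1)}$ is a non-zero constant, and $f^{(1)}=c\,f$, which is (\ref{aa1}) with $a=0$.

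For $a\neq 0$, put $y=f-a$, so that the functional equation becomes the first-order linear differential equation $y^{(1)}=y\,e^{\alpha}+a$. Integrating yields
\[
f-a=e^{-\beta}\Big(a\!\int e^{\beta}\,dz+C\Big),\qquad \beta=-\!\int e^{\alpha}\,dz,
\]
with $C$ a constant, whence $\rho(\beta)=\rho(e^{\alpha})$. If $\rho(f)<\infty$, then $\rho(e^{\alpha})\le\rho(f)<\infty$ forces $\alpha$ to be a polynomial; if $\deg\alpha\ge1$, then since $a\neq0$ one checks at once that $f^{(1)}=(f-a)e^{\alpha}+a$ has no polynomial solution, and a Wiman--Valiron argument shows it has no finite-order entire solution at all, contradicting $\rho(f)<\infty$. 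Hence $\alpha$ is constant and (\ref{aa1}) holds. If $\rho(f)=\infty$, the displayed representation gives $\rho_{1}(f)\le\rho(e^{\alpha})$, so when $\alpha$ is a non-constant polynomial of degree $d$ we get $\rho_{1}(f)\le d$; the task is then to prove $\rho_{1}(f)=d\in\mathbb{N}$, and likewise that a transcendental $\alpha$ forces $\rho_{1}(f)=\infty$. Either conclusion contradicts $\rho_{1}(f)\notin\mathbb{N}\cup\{\infty\}$, leaving only $\alpha$ constant.

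The main obstacle is precisely this last step when $\rho(f)=\infty$. The upper bound $\rho_{1}(f)\le\rho(e^{\alpha})$ is immediate from the explicit representation of $f-a$, but bounding $\rho_{1}(f)$ from \emph{below} amounts to showing that no cancellation occurs between the zero-free factor $e^{-\beta}$ and the antiderivative factor $a\int e^{\beta}\,dz+C$, and this seems to require the Borel--Carath\'eodory estimate relating $M(r,\beta)$ to $m(r,e^{\alpha})$ together with a Wiman--Valiron estimate for the central index of $f$ in the directions where $\operatorname{Re}\alpha$ is largest. It is exactly here that extra hypotheses must be brought in --- for instance a bound on $\rho_{1}(f)$, smallness of $N(r,a;f)$ or of $N(r,a;f^{(1)})$, or the requirement that $f$ solve a prescribed first-order equation --- which is why the $\mathbb{C}^{m}$ analogue established below is proved only under such additional conditions.
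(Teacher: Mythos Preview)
The statement you are attempting to prove is \emph{Conjecture A} --- Br\"uck's conjecture --- and the paper does \emph{not} prove it. It is quoted as an open problem; the paper establishes only the $\mathbb{C}^m$ analogues of the known partial results (the case $a=0$, and the case with the extra hypothesis $N(r,0;f^{(1)})=o(T(r,f))$), never the full conjecture. So there is no ``paper's own proof'' to compare against.

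Your write-up is honest about this: the final paragraph concedes that the lower bound $\rho_1(f)\ge\rho(e^{\alpha})$ in the infinite-order case is precisely the missing step, and that extra hypotheses are what make the $\mathbb{C}^m$ theorems in the paper go through. That diagnosis is correct, and your treatment of $a=0$ is essentially Br\"uck's original argument (and the one-variable shadow of the paper's proof of Theorem~\ref{t3.1}). But two points should be flagged. First, even your finite-order case for $a\neq 0$ is not actually argued: ``a Wiman--Valiron argument shows it has no finite-order entire solution at all'' is an appeal to a known theorem (Gundersen--Yang), not a proof, and that result itself is nontrivial. Second, and more importantly, the infinite-order case with $\rho_1(f)\in(0,\infty)\setminus\mathbb{N}$ is genuinely open in the literature (partial results cover $\rho_1(f)<1/2$ and similar ranges), so the gap you identify is not a gap in your exposition but a gap in mathematics. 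Your proposal is therefore not a proof of Conjecture~A; it is a correct outline of why the conjecture reduces to controlling $\rho_1(f)$ from below in terms of $\alpha$, together with a candid acknowledgment that this reduction has not been completed.
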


It is easy to verify that all the solutions of (\ref{aa1}) takes the form
\begin{align}\label{ab1}
f(z)=c_1e^{cz}+a-\frac{a}{c},
\end{align}
where $c_1$ is a non-zero constant. Since $f$ and $f^{(1)}$ share $a$ CM in \emph{Conjecture A}, there exists an entire function $\alpha$ in $\mathbb{C}$ such that
\begin{align}\label{aa2}
\frac{f^{(1)}(z)-a}{f(z)-a}=e^{\alpha(z)}.
\end{align}

Therefore in order to resolve \emph{Conjecture A}, we have to prove that $\alpha$ reduces to a constant. As a result if $\alpha$ is a transcendental entire function or a non-constant polynomial in (\ref{aa2}), then \emph{Conjecture A} does not hold. On the other hand we see that \emph{Conjecture A} may not be true if we assume that $\rho(f)=+\infty$ as all the solutions of (\ref{aa1}) are given by (\ref{ab1}), where we see that $\rho(f)=1$. Therefore \emph{Conjecture A} can be re-stated as follows:

\begin{conjB} Let $f$ be a non-constant entire function in $\mathbb{C}$ such that $\rho_1(f)\not\in\mathbb{N}\cup\{\infty\}$ and $a\in\mathbb{C}$. If $f^{(1)}-a=e^{\alpha}(f-a)$, where $\alpha$ is an entire function in $\mathbb{C}$, then $\alpha$ reduces to a constant, $d$ say and $f(z)$ takes the form $f(z)=c_1e^{cz}+a-\frac{a}{c}$, where $c=e^d$ and $c_1$ are non-zero constant.
\end{conjB}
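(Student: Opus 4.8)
We must prove that the entire function $\alpha$ in $f^{(1)}-a=e^{\alpha}(f-a)$ reduces to a constant; once this is established, writing $e^{\alpha}\equiv c$ (a nonzero constant) and integrating the resulting linear ODE $f^{(1)}-cf=a(1-c)$ yields at once $f(z)=c_{1}e^{cz}+a-a/c$ with $c_{1}\neq 0$ and $c=e^{d}$, which is the asserted form. I would split the argument into cases.

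\emph{Case $a=0$.} Here $f^{(1)}=e^{\alpha}f$, and comparing zero-multiplicities on the two sides — a zero of $f$ of order $k\geq 1$ is a zero of $f^{(1)}$ of order $k-1$ but of $e^{\alpha}f$ of order $k$ — forces $f$ to be zero-free, so $f=e^{g}$ for some entire $g$ and substitution gives $g^{(1)}=e^{\alpha}$. If $\alpha$ were nonconstant then $g=\int e^{\alpha}$ is transcendental with $\rho(g)=\rho(e^{\alpha})$, which equals $\deg\alpha\in\mathbb{Z}^{+}$ if $\alpha$ is a polynomial and $+\infty$ if $\alpha$ is transcendental; since $e^{g}$ is zero-free, $T(r,f)=m(r,e^{g})$ and so $\rho_{1}(f)=\rho_{1}(e^{g})=\rho(g)\in\mathbb{N}\cup\{\infty\}$, contradicting the hypothesis. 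Hence $\alpha$ is constant.

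\emph{Case $a\neq 0$, first reductions.} Since $e^{\alpha}=(f^{(1)}-a)/(f-a)$ is a zero-free entire function, it has finite order exactly when $\alpha$ is a polynomial, say of degree $n$; assuming $n\geq 1$ I would seek a contradiction as follows. If moreover $N(r,1/(f-a))=S(r,f)$, I would run Br\"{u}ck's original reduction: write $f-a=\beta e^{\gamma}$ with $T(r,\beta)=S(r,f)$, substitute, and compare proximity functions via the lemma on the logarithmic derivative to force $e^{\alpha}$, hence $\gamma^{(1)}$, to be small, which collapses the growth of $f$. Otherwise, view $\psi=f-a$ as a solution of the first-order linear ODE $\psi^{(1)}-e^{\alpha}\psi=a$, whose general solution is $\psi=Ce^{A}+\psi_{p}$ with $A=\int e^{\alpha}$ and $\rho_{1}(e^{A})=\rho(A)=n$: if $f$ has infinite order this pins $\rho_{1}(f)=n\in\mathbb{N}$ by the standard growth theory for linear ODEs with a coefficient of order $n$, a contradiction; if $f$ has finite order I would apply the Gundersen--Yang/Wiman--Valiron argument along a ray on which $\mathrm{Re}\,A\to+\infty$ to conclude that $f$ grows faster than any finite order, again a contradiction. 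Thus $n=0$ and $\alpha$ is constant.

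\emph{The main obstacle.} What remains is the case in which $\alpha$ is transcendental (so $\rho(f)=\infty$), $f-a$ has infinitely many zeros, and $\rho_{1}(f)\in[1/2,\infty)\setminus\mathbb{N}$. From the relation one gets $T(r,e^{\alpha})=O(T(r,f))$, so $\rho_{1}(e^{\alpha})=\rho(\alpha)\leq\rho_{1}(f)$; but to finish one needs a reverse inequality forcing $\rho(\alpha)=0$, and the only estimate of the right flavour — the $\cos\pi\rho$-type growth lemma — delivers this only under $\rho_{1}(f)<1/2$. To go beyond I would bound $\max_{|z|=r}\mathrm{Re}\,\alpha(z)$ from above by $T(r,e^{\alpha})$ through a Borel--Carath\'eodory estimate, insert this into a Wiman--Valiron analysis of $\psi^{(1)}-e^{\alpha}\psi=a$ at a point where $|\psi|$ is maximal, and balance the outcome against $N(r,1/(f-a))$ using the first main theorem; but I expect this to fall short in general, since this is precisely the part of Br\"{u}ck's conjecture that is still open. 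Accordingly I anticipate that the honest outcome is a proof only under an additional structural hypothesis — for instance a bound tying $\rho_{1}(f)$ to the exponent of convergence of the zeros of $f-a$, or the PDE/Fermat-type constraint suggested by the title — which appears to be the route the paper follows in $\mathbb{C}^{m}$.
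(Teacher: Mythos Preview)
The paper does \emph{not} prove Conjecture~B: it is stated there explicitly as a conjecture, presented as a reformulation of Br\"{u}ck's original Conjecture~A, which remains open in general. So there is no ``paper's own proof of that statement'' to compare your proposal against. Your final paragraph is exactly right on this point: the general case with $a\neq 0$, transcendental $\alpha$, and $\rho_1(f)\geq 1/2$ is the still-open core of the conjecture, and nothing in the paper resolves it.

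What the paper \emph{does} prove are the $\mathbb{C}^m$ analogues of the two known special cases: Theorem~\ref{t3.1} handles $a=0$ (the analogue of Br\"{u}ck's Theorem~D), and Theorem~\ref{t3.2} handles $a\neq 0$ under the extra hypothesis $\parallel N(r,0;\partial_{z_i}(f))=o(T(r,f))$ (the analogue of Theorem~E). Your treatment of the case $a=0$ is essentially the same argument the paper gives for Theorem~\ref{t3.1}: write $f=e^{\beta}$, deduce $\beta'=e^{\alpha}$ (in the paper, $\partial_{z_i}\beta=e^{\delta_i}$), use $\rho_1(f)=\rho(\beta)$ via Borel--Carath\'eodory (their Lemma~\ref{t2.2}), and conclude that $\alpha$ must be constant since otherwise $\rho(\beta)=\deg\alpha\in\mathbb{N}$ or $\rho(\beta)=\infty$. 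Your sketch for $a\neq 0$ under a smallness hypothesis on the $a$-points is in the right spirit but differs from the paper's route for Theorem~\ref{t3.2}, which uses an auxiliary Wronskian-type function $H$ built from $\partial_{z_k}^2 f/\partial_{z_k} f$ and $((\partial_{z_k} f - a)/(f-a))^2$ and a local Taylor analysis at $a$-points, rather than a factorisation $f-a=\beta e^{\gamma}$.

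In short: your assessment of what is provable and what is not matches the paper's; your $a=0$ argument aligns with the paper's Theorem~\ref{t3.1}; and you should not expect to find a full proof of Conjecture~B anywhere in the paper, because none is claimed.
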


Br\"{u}ck \cite{Bruck-1996} himself demonstrated that \emph{Conjecture A} does not hold when $\rho_1(f)\in\mathbb{N}\cup\{\infty\}$, by constructing solutions of the following differential equations:

\begin{align*}
 \frac{f^{(1)}(z)-a}{f(z)-a}=e^{z^{n}},
 \end{align*}
where $\rho_1(f)=n\in\mathbb{N}$ and   
\begin{align*}
\frac{f^{(1)}(z)-a}{f(z)-a}=e^{e^{z}},
\end{align*}
where $\rho_1(f)=\infty$.

\emph{Conjecture A} for the special case $a=0$ had been resolved by Br\"{u}ck \cite{Bruck-1996} as follows.

\begin{theoD}\cite{Bruck-1996} Let $f$ be a non-constant entire function on $\mathbb{C}$ such that $\rho_1(f)\not\in\mathbb{N}\cup\{\infty\}$. If $f$ and $f^{(1)}$ share $0$ CM, then $f^{(1)}=cf$, where $c$ is a non-zero constant and $f(z)$ takes the form $f(z)=c_1e^{cz}$, where $c_1$ is a non-zero constant.
\end{theoD}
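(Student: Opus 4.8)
The plan is to use the growth hypothesis on $\rho_{1}(f)$ to force the multiplicative factor relating $f$ and $f^{(1)}$ to be constant, and then to integrate. First I would note that since $f$ and $f^{(1)}$ are entire and share $0$ CM, $f$ cannot vanish anywhere: a zero of $f$ of order $k\geq 1$ is a zero of $f^{(1)}$ of order exactly $k-1\ne k$, which is incompatible with $\mu_{f}^{0}=\mu_{f^{(1)}}^{0}$ (for $k=1$ the point is a zero of $f$ but not of $f^{(1)}$). Hence $f=e^{g}$ for some entire $g$, and since $\mu_{f}^{0}\equiv 0$ forces $\mu_{f^{(1)}}^{0}\equiv 0$, the function $f^{(1)}/f=g^{(1)}$ is entire and zero-free, so $g^{(1)}=e^{\alpha}$ for some entire $\alpha$. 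The whole theorem now amounts to showing that $\alpha$ is constant.

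The key step is the identity $\rho_{1}(f)=\rho(e^{\alpha})$. To prove it I would first establish $\rho_{1}(e^{g})=\rho(g)$ by a two-sided comparison of $T(r,e^{g})$ with $M(r,g)=\max_{|z|=r}|g(z)|$. On one side, $m(r,e^{g})=\tfrac{1}{2\pi}\int_{0}^{2\pi}\big(\operatorname{Re} g(re^{i\theta})\big)^{+}d\theta\leq M(r,g)$, so $T(r,e^{g})\leq M(r,g)$. On the other side, a Poisson-kernel estimate gives $\max_{|z|=r}\operatorname{Re} g(z)\leq 3\,T(2r,e^{g})$, and the Borel--Caratheodory theorem bounds $M(r/4,g)$ by a fixed multiple of $\max_{|z|=r/2}\operatorname{Re} g(z)$ up to an $O(|g(0)|)$ term; chaining these yields $\tfrac{1}{6}M(r/4,g)-O(1)\leq T(r,e^{g})$. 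Applying $\log\log(\cdot)$ and letting $r\to\infty$ gives $\rho_{1}(e^{g})=\rho(g)$. Combining this with the fact that differentiation preserves the order of an entire function, $\rho(g)=\rho(g^{(1)})=\rho(e^{\alpha})$, so $\rho_{1}(f)=\rho(e^{\alpha})$.

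It remains to read off $\alpha$. The order $\rho(e^{\alpha})$ is either $+\infty$ (when $\alpha$ is transcendental) or a nonnegative integer equal to $\deg\alpha$ (when $\alpha$ is a polynomial). Since $\rho_{1}(f)\notin\mathbb{N}\cup\{\infty\}$, the only option left is $\rho(e^{\alpha})=0$, i.e.\ $\deg\alpha=0$, i.e.\ $\alpha\equiv d$ for a constant $d$. Then $g^{(1)}\equiv e^{d}=:c$, and $c\ne 0$ because otherwise $g$, and hence $f$, would be constant; integrating, $g(z)=cz+c_{0}$, so $f(z)=e^{c_{0}}e^{cz}=c_{1}e^{cz}$ with $c_{1}=e^{c_{0}}\ne 0$, and in particular $f^{(1)}=cf$.

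I expect the main obstacle to be the lower bound $T(r,e^{g})\gtrsim M(r/4,g)$, which is what upgrades a one-sided growth estimate to the exact relation $\rho_{1}(e^{g})=\rho(g)$ that the argument needs: the upper bound $T(r,e^{g})\leq M(r,g)$ is immediate, but the lower bound relies on the Borel--Caratheodory theorem together with careful bookkeeping of the radii and of the $|g(0)|$ error term. (If a version of $\rho_{1}(e^{g})=\rho(g)$ is already available among the order and hyper-order lemmas developed earlier, this paragraph can simply cite it.) All the remaining ingredients---the ``no zeros'' reduction, the order-invariance under differentiation, and the elementary classification of $\rho(e^{\alpha})$---are routine.
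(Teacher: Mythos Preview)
Your proposal is correct and follows essentially the same route as the paper's own argument (the proof of Theorem~\ref{t3.1} specialized to $m=1$): write $f=e^{\beta}$, use the Borel--Caratheodory based identity $\rho_1(e^{\beta})=\rho(\beta)$ (the paper's Lemma~\ref{t2.2}), write $\beta'=e^{\delta}$, invoke $\rho(\beta')=\rho(\beta)$ (Lemma~\ref{2L.1}) together with $\rho(e^{\delta})=\deg\delta$ or $+\infty$ (Lemma~\ref{2A}) to force $\delta$ constant, and integrate. Your only cosmetic deviation is computing $\rho_1(f)=\rho(e^{\alpha})$ in one step rather than the paper's two steps $\rho_1(f)=\rho(\beta)$ and $\rho(\beta)=\rho(\beta')=\deg\delta$, but the ingredients and logic are identical.
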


In the same paper, Br\"{u}ck \cite{Bruck-1996} proved the following result, which demonstrates that the growth condition on $f$ in 
\emph{Conjecture A} can be removed provided that $N(r,0;f^{(1)})=o(T(r,f))$.
\begin{theoE}\cite{Bruck-1996} Let $f$ be a non-constant entire function on $\mathbb{C}$ such that $N(r,0;f^{(1)})=o(T(r,f))$. If $f$ and $f^{(1)}$ share $a$ CM, then $f^{(1)}-1=c(f-1)$, where $c$ is a non-zero constant and $f(z)$ takes the form
$f(z)=c_1e^{cz}+a-\frac{a}{c}$, where $c_1$ is a non-zero constant. 
\end{theoE}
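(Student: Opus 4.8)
The plan is to run the classical Br\"{u}ck-type argument: convert the CM condition into an exponential factor $e^{\alpha}$, produce a first-order differential identity by a single differentiation, and then use the lemma on the logarithmic derivative together with the hypothesis $N(r,0;f^{(1)})=o(T(r,f))$ to force $\alpha$ to be constant. We may assume $a\neq 0$ (for $a=0$ the stated conclusion fails, e.g.\ for $f=e^{e^{z}}$, and one is in the regime of Theorem D), and that $f$ is transcendental, since a polynomial cannot satisfy the hypotheses when $a\neq 0$. Since $f$ and $f^{(1)}$ are entire and share $a$ CM, the quotient $(f^{(1)}-a)/(f-a)$ is entire and zero-free, so $(f^{(1)}-a)/(f-a)=e^{\alpha}=:\beta$ for some entire $\alpha$; the goal is $\beta\equiv\text{const}$. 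Being the logarithmic derivative of $f-a$, the function $\beta$ satisfies $m(r,\beta)=S(r,f)$, whence $T(r,\beta)=m(r,\beta)=S(r,f)$; likewise $\beta^{(1)}/\beta=\alpha^{(1)}$ is entire with $T(r,\alpha^{(1)})=S(r,f)$.

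Differentiating $f^{(1)}-a=\beta(f-a)$ and replacing $f-a$ by $(f^{(1)}-a)/\beta$ gives
\[f^{(2)}=\bigl(\alpha^{(1)}+\beta\bigr)f^{(1)}-a\,\alpha^{(1)},\qquad\text{so}\qquad \frac{a\,\alpha^{(1)}}{f^{(1)}}=\alpha^{(1)}+\beta-\frac{f^{(2)}}{f^{(1)}}.\]
Assume, for a contradiction, that $\beta$ is non-constant; then $\alpha^{(1)}\not\equiv 0$. The right-hand side above has proximity function $S(r,f)$: the terms $\alpha^{(1)}$ and $\beta$ by the previous paragraph, and $f^{(2)}/f^{(1)}$ as a logarithmic derivative of $f^{(1)}$ (using $T(r,f^{(1)})=O(T(r,f))$). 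Hence $m(r,a\alpha^{(1)}/f^{(1)})=S(r,f)$, and dividing out the small nonzero function $a\alpha^{(1)}$ (here we use $a\neq 0$) yields $m(r,1/f^{(1)})=S(r,f)$.

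Now the first main theorem and the hypothesis give $T(r,f^{(1)})=m(r,1/f^{(1)})+N(r,0;f^{(1)})+O(1)=S(r,f)$, while $f-a=(f^{(1)}-a)/\beta$ forces $T(r,f)=T(r,f-a)+O(1)\le T(r,f^{(1)})+T(r,\beta)+O(1)=S(r,f)$, which is impossible for a non-constant entire function. Therefore $\beta\equiv c$ for some constant $c\neq 0$, i.e.\ $f^{(1)}-a=c(f-a)$; integrating this linear ODE gives $f(z)=c_{1}e^{cz}+a-\tfrac{a}{c}$, with $c_{1}\neq 0$ since $f$ is non-constant.

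I expect the conceptual core --- that $N(r,0;f^{(1)})=o(T(r,f))$ together with the logarithmic-derivative estimate collapses the characteristic of $f^{(1)}$, which then drags down that of $f$ --- to be short. The main obstacle is rather the careful bookkeeping of the $S(r,\cdot)$ error terms (reconciling $S(r,f)$, $S(r,f^{(1)})$ and $S(r,\beta)$, and controlling the finitely many exceptional sets on which the estimates may fail), together with cleanly disposing of the degenerate cases (such as $f^{(1)}\equiv a$ or $\deg f\le 1$) before invoking the identity above.
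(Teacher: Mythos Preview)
Your argument contains a genuine gap at the very first step. You write that $\beta=(f'-a)/(f-a)$ is ``the logarithmic derivative of $f-a$'' and conclude $m(r,\beta)=S(r,f)$. But the logarithmic derivative of $f-a$ is $(f-a)'/(f-a)=f'/(f-a)$, not $(f'-a)/(f-a)$; since you have explicitly taken $a\neq 0$, these differ by the term $a/(f-a)$, whose proximity function is $m(r,a;f)$ and is \emph{not} a priori small. Thus the logarithmic–derivative lemma does not yield $m(r,\beta)=S(r,f)$, and you invoke exactly this estimate twice: once to bound the right–hand side of $a\alpha'/f'=\alpha'+\beta-f''/f'$, and again in the final inequality $T(r,f)\le T(r,f')+T(r,\beta)+O(1)$. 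Without it, your chain collapses: from the identity one only gets $m(r,\beta)$ and $m(r,a\alpha'/f')$ differ by $S(r,f)$, which is circular, and the final step gives merely $T(r,f)\le 2T(r,f')+S(r,f)$, no contradiction.

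For comparison: the paper does not prove Theorem~E (it is quoted from Br\"uck), but it does prove the several–variable analogue (Theorem~\ref{t3.2}), and there the contradiction is reached by a different route that avoids ever asserting $m(r,\beta)=S(r,f)$ directly. One forms the auxiliary function
\[
H=\frac{F'}{F}-\frac{G'}{G},\qquad F=\frac{f''}{f'},\quad G=\Bigl(\frac{f'-a}{f-a}\Bigr)^{2},
\]
which is built entirely from logarithmic derivatives and hence has $m(r,H)=S(r,f)$; a local Taylor analysis at the (shared) $a$-points shows $H$ is holomorphic there, so $T(r,H)=S(r,f)$, and since these $a$-points are zeros of $H$ one obtains $N(r,a;f)=S(r,f)$. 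Combined with the second main theorem applied to $f'$ (values $0$ and $a$) and the hypothesis $N(r,0;f')=S(r,f)$, this forces $T(r,f')=S(r,f)$ and then $m(r,a;f)\le m(r,0;f')+S(r,f)=S(r,f)$, giving the contradiction $T(r,f)=S(r,f)$. The essential extra idea you are missing is precisely this: one must first control $N(r,a;f)$ (equivalently $m(r,a;f)$, equivalently $m(r,\beta)$), and that requires an auxiliary function beyond a single differentiation.
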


Now motivated by \emph{Conjecture B}, we suggest to extend \emph{Conjecture B} into several complex variables as follows:

\begin{conj} Let $f$ be a non-constant entire function in $\mathbb{C}^m$ such that $\rho_1(f)\not\in\mathbb{N}\cup\{\infty\}$ and $a\in\mathbb{C}$. If 
\begin{align}\label{bb1}
\partial_{z_i}(f(z))-a=e^{\alpha(z)}(f(z)-a)
\end{align}
 for all $i\in\mathbb{Z}[1,m]$, where $\alpha(z)$ is an entire function in $\mathbb{C}^m$ and $a$ is a finite complex number, then $\alpha(z)$ reduces to a constant, $c$ say and
\begin{align*}
f(z)=c_1e^{A(z_1+ \cdots+z_m)}+a-\frac{a}{A},
\end{align*}
where $A=e^c$ and $c_1$ are non-zero constant.
\end{conj}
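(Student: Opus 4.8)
The plan is to reduce this conjecture to the one-variable Br\"{u}ck conjecture (Conjecture B) by exploiting that the \emph{same} multiplier $e^{\alpha}$ occurs in (\ref{bb1}) for every index $i$, and then to bring in the one-variable machinery — the first main theorem, the lemma on the logarithmic derivative, the growth of solutions of first-order linear ordinary differential equations, and the Borel--Carath\'{e}odory type estimates developed in this paper — to pin $\alpha$ down to a constant. \textbf{Step 1 ($f$ depends only on $w:=z_1+\cdots+z_m$).} Subtracting (\ref{bb1}) for two indices $i$ and $j$ gives $\partial_{z_i}f\equiv\partial_{z_j}f$ on $\mathbb{C}^m$ for all $i,j$. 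Hence each constant vector field $\partial_{z_i}-\partial_{z_j}$ annihilates $f$, and since these fields span the hyperplane $\{v\in\mathbb{C}^m:v_1+\cdots+v_m=0\}$, the function $f$ is invariant under all translations by vectors of that hyperplane. Therefore $f(z)=F(w)$ for the entire function $F(w):=f(w,0,\ldots,0)$ on $\mathbb{C}$, and $\partial_{z_i}f(z)=F'(w)$ for every $i$. If $F\equiv a$, then $f$ is constant, contradicting the hypothesis, so $F\not\equiv a$.

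\textbf{Step 2 (reduction to Conjecture B).} By Step 1, (\ref{bb1}) becomes $F'(w)-a=e^{\alpha(z)}\bigl(F(w)-a\bigr)$, so the zero-free entire function $e^{\alpha(z)}=\bigl(F'(w)-a\bigr)/\bigl(F(w)-a\bigr)$ depends on $z$ only through $w$; writing this function of $w$ as $e^{\beta(w)}$ with $\beta$ entire on $\mathbb{C}$ (possible since $\mathbb{C}$ is simply connected), one finds that $\alpha(z)-\beta(w)$ is a constant in $2\pi\iota\,\mathbb{Z}$, which may be absorbed into $\beta$; thus $\alpha(z)=\beta(z_1+\cdots+z_m)$. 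Since $f$ is built from the single linear form $w$, the basic comparisons for order and hyper-order established earlier in the paper yield $\rho_1(f)=\rho_1(F)$, so that $\rho_1(F)\notin\mathbb{N}\cup\{\infty\}$; we are reduced to proving Conjecture B for the pair $(F,\beta)$.

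\textbf{Step 3 (forcing $\beta$ constant and solving the equation).} Suppose $\beta$ is non-constant. Since $e^\beta$ is entire and zero-free, $T(r,e^\beta)=m(r,e^\beta)$; combining $e^\beta=(F'-a)/(F-a)$ with the first main theorem and the hyper-order form of the lemma on the logarithmic derivative, one estimates $m(r,e^\beta)$ in terms of $\log^+T(r,F)+\log r$ together with the contribution of the poles of $a/(F-a)$, i.e.\ of $N(r,a;F)$. Once that last term is controlled — automatically when $a=0$ (as in Theorem D), or under a hypothesis $N(r,a;F)=o(T(r,F))$ (as in Theorem E), or when $\rho(F)<\infty$ — one gets $\rho(e^\beta)\le\rho_1(F)<\infty$, so $\beta$ is a polynomial with $\deg\beta\le\rho_1(F)$. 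If $\rho_1(F)<1$ this forces $\deg\beta=0$ at once; otherwise the known growth behaviour of the solutions of the first-order linear equation $(F-a)'-e^{\beta}(F-a)=a$ gives $\rho_1(F)=\deg\beta\in\mathbb{N}$, contradicting $\rho_1(F)\notin\mathbb{N}\cup\{\infty\}$. Hence $\beta\equiv c$ is constant, and solving $(F-a)'-e^{c}(F-a)=a$ yields $F(w)=\frac{c_1}{A}e^{Aw}+a-\frac{a}{A}$ with $A=e^{c}$ and $c_1\neq0$ (because $F$ is non-constant), which is precisely the asserted form of $f$ after substituting $w=z_1+\cdots+z_m$.

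\textbf{Main obstacle.} The genuine difficulty is the growth estimate in Step 3 when $a\neq0$: controlling $T(r,e^{\beta})$, equivalently $m\bigl(r,a/(F-a)\bigr)$, \emph{without} any a priori restriction on the $a$-points or on the order of $F$ — this is exactly what keeps the classical Br\"{u}ck conjecture open. It is here that the higher-dimensional Borel--Carath\'{e}odory theorem and the order/hyper-order results of this paper are needed, supplying the missing growth comparisons under the additional hypotheses of the paper's main theorems; unconditionally, the statement is, like its one-variable ancestor, still open.
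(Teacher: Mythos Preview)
The statement you are attempting is Conjecture~1.1, which the paper does \emph{not} prove; only the special cases $a=0$ (Theorem~\ref{t3.1}) and the case with the extra hypothesis $\parallel N(r,0;\partial_{z_i}f)=o(T(r,f))$ (Theorem~\ref{t3.2}) are established. Your Steps~1--2 give a correct and elegant reduction that the paper never makes: subtracting (\ref{bb1}) for two indices shows $f(z)=F(z_1+\cdots+z_m)$ with $F$ entire on $\mathbb{C}$, and the equation collapses to $F'-a=e^{\beta}(F-a)$ with $\rho_1(F)=\rho_1(f)$, so Conjecture~1.1 is \emph{equivalent} to the one-variable Conjecture~B. (One housekeeping point: the identity $\rho_1(f)=\rho_1(F)$ is not in the paper; it follows from $M(r,f)=M(\sqrt{m}\,r,F)$ for $m\ge2$ together with Lemma~\ref{l2.1}, and you should say so rather than invoke unspecified ``basic comparisons''.) The paper, by contrast, works intrinsically in $\mathbb{C}^m$: for $a=0$ it writes $f=e^{\beta}$, uses Lemma~\ref{t2.2} and Lemma~\ref{2L.1} to force $\rho(\beta)\notin\mathbb{N}\cup\{\infty\}$ and hence each $\partial_{z_i}\beta$ constant; for Theorem~\ref{t3.2} it runs an auxiliary-Wronskian argument with $F=\partial_{z_k}^2f/\partial_{z_k}f$ and $G=\bigl((\partial_{z_k}f-a)/(f-a)\bigr)^2$. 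Your reduction would recover both theorems as the classical Theorems~D and~E in one variable, which is shorter; the paper's route has the merit of developing the $\mathbb{C}^m$ tools (Lemma~\ref{t2.1}, Lemma~\ref{t2.2}, Lemma~\ref{2L.1}) for their own sake.

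Your Step~3, as you yourself flag, is not a proof. Two specific issues: first, the term you need to control for the bound $T(r,e^{\beta})\le O(\log T(r,F))$ is $m\bigl(r,a/(F-a)\bigr)$, not $N(r,a;F)$; by the first main theorem these are complementary, so a hypothesis like ``$N(r,a;F)=o(T(r,F))$'' makes $m(r,a/(F-a))$ \emph{large}, not small. (Theorem~E's actual hypothesis is on $N(r,0;F')$, which is a different animal and is exploited through the second main theorem, not through the decomposition you wrote.) Second, even granting $\rho(F)<\infty$ you only get $\rho(e^{\beta})\le\rho(F)$, not $\rho(e^{\beta})\le\rho_1(F)$; the step from ``$\beta$ polynomial'' to ``$\deg\beta=\rho_1(F)$'' then requires the nontrivial growth theory of $y'-e^{\beta}y=a$, which you cite but do not supply. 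In short: your reduction is sound and genuinely different from the paper's method, and your honest diagnosis that the residual obstacle is exactly the open one-variable Br\"{u}ck conjecture is correct --- but that means you have not proved Conjecture~1.1, and neither has the paper.
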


In the following two examples, we can verify that {\emph Conjecture 1.1} does not hold when $\rho_1(f)\in\mathbb{N}\cup\{\infty\}$.

\begin{exm} Let 
\begin{align*}
f(z_1,\ldots,z_m)=e^{e^{z_1+\cdots+z_m}}\int_{0}^{z_1+\cdots+z_m}e^{-e^t}(1-e^t)dt.
\end{align*}

Clearly $\rho_1(f)=1$. Note that for all $i\in\mathbb{Z}[1,m]$, we have
\begin{align*}
\partial_{z_i}(f(z))=e^{z_1+\cdots+z_m}(f(z)-1)+1
\end{align*}
and so for all $i\in\mathbb{Z}[1,m]$, we get
\begin{align*}
\partial_{z_i}(f(z))-1=e^{z_1+\cdots+z_m}(f(z)-1).
\end{align*}
\end{exm}

\begin{exm} Let
\begin{align*}
f(z_1,\ldots,z_m)=e^{\beta(z)}\int_{0}^{z_1+\cdots+z_m}e^{-\beta(z)}(1-e^{e^t})dt,
\end{align*}
where $\beta(z)=\int_{0}^{z_1+\cdots+z_m}e^{e^t}dt$. Clearly $\rho_1(f)=+\infty$. Note that for all $i\in\mathbb{Z}[1,m]$, we have
\begin{align*}
\partial_{z_i}(f(z))=e^{e^{z_1+\cdots+z_m}}(f(z)-1)+1
\end{align*}
and so for all $i\in\mathbb{Z}[1,m]$, we get
\begin{align*}
\partial_{z_i}(f(z))-1=e^{e^{z_1+\cdots+z_m}}(f(z)-1).
\end{align*}
\end{exm}

Following example shows that \emph{Conjecture 1.1} does not holds if $e^{\alpha(z)}$ is replaced by an entire function having zeros in 
(\ref{bb1}). 
\begin{exm} Let 
\begin{align*}
f(z_1,\ldots,z_m)=e^{\frac{(z_1+\cdots+z_m)^2}{2}}\left(\int_{0}^{z_1+\cdots+z_m}e^{-\frac{t^2}{2}}(1-t)dt+1\right).
\end{align*}

Note that for all $i\in\mathbb{Z}[1,m]$, we have
\begin{align*}
\partial_{z_i}(f(z))=(z_1+\cdots+z_m)f(z)+1-(z_1+\cdots+z_m)
\end{align*}
and so for all $i\in\mathbb{Z}[1,m]$, we get
\begin{align*}
\partial_{z_i}(f(z))-1=(z_1+\cdots+z_m)(f(z)-1).
\end{align*}
\end{exm}

Our first result shows that \emph{Conjecture 1.1} holds when $a=0$.
\begin{theo}\label{t3.1} Let $f$ be a non-constant entire function in $\mathbb{C}^m$ such that $\rho_1(f)\not\in\mathbb{N}\cup\{\infty\}$. If 
\begin{align*}
\partial_{z_i}(f(z))=e^{\alpha(z)}f(z)
\end{align*}
for all $i\in\mathbb{Z}[1,m]$, where $\alpha(z)$ is an entire function in $\mathbb{C}^m$, then $\alpha(z)$ reduces to a constant, $c$ say and 
\begin{align*}
f(z_1,\ldots,z_m)=c_1e^{A(z_1+ \cdots+z_m)},
\end{align*}
where $A=e^c$ and $c_1$ are non-zero constant.
\end{theo}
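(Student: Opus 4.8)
The plan is to use the hypothesis for \emph{every} $i$ to collapse $f$ to a function of the single variable $w=z_1+\cdots+z_m$, reducing the problem to a one-variable first-order equation of Br\"uck type, and then to transfer the hyper-order condition. From $\partial_{z_i}f=e^{\alpha}f=\partial_{z_j}f$ for all $i,j$ one gets that every directional derivative of $f$ along a direction $e_i-e_j$ vanishes, so $f$ is constant on each affine complex line in such a direction; since the vectors $e_i-e_1$ span the hyperplane $\{z_1+\cdots+z_m=0\}$ and $\mathbb{C}^m$ is connected, $f$ is constant on every fibre of $z\mapsto z_1+\cdots+z_m$. Hence $f(z)=g(w)$ for a non-constant entire function $g$ of one variable, and the hypothesis becomes $g'(w)=e^{\alpha(z)}g(w)$ for all $z$. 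Restricting to the line $z=(z_1,0,\dots,0)$ gives a genuine first-order equation $g'=e^{\tilde\alpha}g$ on $\mathbb{C}$ with $\tilde\alpha(w)=\alpha(w,0,\dots,0)$ entire. A zero $w_0$ of $g$ of order $n\ge1$ would make the left side vanish there to order $n-1$ and the right side to order $n$ (as $e^{\tilde\alpha}$ is zero-free), which is impossible; so $g$ is zero-free, $g=e^{\gamma}$ with $\gamma$ entire and non-constant, and $\gamma'=e^{\tilde\alpha}$. Moreover $e^{\alpha(z)}=g'(w)/g(w)=\gamma'(w)=e^{\tilde\alpha(w)}$, so the entire function $\alpha(z)-\tilde\alpha(z_1+\cdots+z_m)$ takes values in the discrete set $2\pi\iota\mathbb{Z}$ and is therefore a constant; thus $\alpha$ equals $\tilde\alpha(z_1+\cdots+z_m)$ up to an additive constant, and in particular $\alpha$ is constant if and only if $\tilde\alpha$ is.

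Next I would transfer the growth condition. The substitution $z\mapsto z_1+\cdots+z_m$ moves the radius only by the bounded factor $\sqrt m$ (indeed $\max_{\|z\|\le r}|z_1+\cdots+z_m|=\sqrt m\,r$) and changes the characteristic function only by bounded multiplicative and additive terms; this can be made precise through the Borel--Carath\'eodory theorem in $\mathbb{C}^m$ established earlier in the paper, which also yields that $T(r,e^{\gamma})$ and $M(r,\gamma)$ agree up to a bounded dilation of $r$ and bounded terms (here $\gamma$ is non-constant, so $M(r,\gamma)\to\infty$). Consequently $\rho_1(f)=\rho_1(g)$; and since $g=e^{\gamma}$, one has $\rho_1(g)=\rho_1(e^{\gamma})=\rho(\gamma)=\rho(\gamma')=\rho(e^{\tilde\alpha})$, the last equalities being the invariance of the order under differentiation and the relation $\gamma'=e^{\tilde\alpha}$.

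Finally, by the elementary dichotomy for exponentials of entire functions, $\rho(e^{\tilde\alpha})$ equals $0$ if $\tilde\alpha$ is constant, equals $\deg\tilde\alpha\in\mathbb{N}$ if $\tilde\alpha$ is a non-constant polynomial, and equals $+\infty$ if $\tilde\alpha$ is transcendental. Hence $\rho_1(f)\in\{0\}\cup\mathbb{N}\cup\{\infty\}$, and the assumption $\rho_1(f)\notin\mathbb{N}\cup\{\infty\}$ forces $\rho(e^{\tilde\alpha})=0$, i.e. $\tilde\alpha\equiv c$ for a constant $c$. Then $\alpha\equiv c$ (after absorbing the additive constant above), $\gamma'=e^{c}=:A\neq0$, so $\gamma(w)=Aw+B$ and $f(z)=e^{B}e^{A(z_1+\cdots+z_m)}$; writing $c_1=e^{B}$ and $A=e^{c}$ gives the claimed form.

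The main obstacle is the middle paragraph: carefully comparing, in the paper's normalization of $m(r,\cdot)$ and $\sigma_m$, the characteristic---and hence the order and hyper-order---of $g$ on $\mathbb{C}$ with that of $f=g(z_1+\cdots+z_m)$ on $\mathbb{C}^m$, for which the higher-dimensional Borel--Carath\'eodory theorem and the paper's basic order estimates are exactly the required tools. The connectedness argument for the reduction, the multiplicity count forcing $g$ zero-free, and the growth dichotomy for $e^{\tilde\alpha}$ are all short; and the hypothesis being assumed for \emph{all} $i$ is used in an essential way, precisely to obtain $f(z)=g(z_1+\cdots+z_m)$.
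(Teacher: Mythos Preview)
Your argument is correct but follows a genuinely different route from the paper's. You first exploit the hypothesis for \emph{all} $i$ to collapse $f$ to a function of $w=z_1+\cdots+z_m$, reducing to a one-variable Br\"uck equation $g'=e^{\tilde\alpha}g$, and then transfer the hyper-order condition down to $g$. The paper instead stays in $\mathbb{C}^m$ throughout: it writes $f=e^{\beta}$ with $\beta$ entire in $\mathbb{C}^m$, uses Lemma~\ref{t2.2} to get $\rho(\beta)=\rho_1(f)$, observes that each $\partial_{z_i}\beta=e^{\delta_i}$ is zero-free, invokes Lemma~\ref{2A} to make each $\delta_i$ a polynomial of degree $\rho(\partial_{z_i}\beta)$, and then applies Lemma~\ref{2L.1} to get $\max_i\deg\delta_i=\rho(\beta)\notin\mathbb{N}\cup\{\infty\}$, forcing every $\delta_i$ constant; only then is $\beta$ read off from its Taylor expansion. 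Your reduction buys economy: you avoid Lemma~\ref{2L.1} entirely, and the growth transfer is actually simpler than you indicate, since $M(r,f)=M(\sqrt{m}\,r,g)$ together with Lemma~\ref{l2.1} gives $\rho_1(f)=\rho_1(g)$ directly; Borel--Carath\'eodory (equivalently Lemma~\ref{t2.2} with $m=1$) is needed only for the classical step $\rho_1(e^{\gamma})=\rho(\gamma)$. The paper's approach, on the other hand, exercises the several-variable lemmas it has just built and would adapt more readily to variants in which the equations for different $i$ are not identical.
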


Our second result shows that \emph{Conjecture 1.1} holds under the following additional condition 
\begin{align*}
\parallel N(r,0;\partial_{z_i}(f))=o(T(r,f))
\end{align*}
for all $i\in\mathbb{Z}[1,m]$. However, in our second result we can drop the hypothesis on the growth of $f$.

\begin{theo}\label{t3.2} Let $f(z)$ be a non-constant entire function in $\mathbb{C}^m$ such that $\parallel N(r,0;\partial_{z_i}(f))=o(T(r,f))$ for all $i\in\mathbb{Z}[1,m]$. If 
\begin{align*}
\partial_{z_i}(f(z))-a=e^{\alpha(z)}(f(z)-a),
\end{align*}
for all $i\in\mathbb{Z}[1,m]$, where $\alpha(z)$ is an entire function in $\mathbb{C}^m$ and $a$ is a non-zero constant, then $\alpha(z)$ reduces to a constant, $c$ say and
\begin{align*}
f(z)=c_1e^{A(z_1+ \cdots+z_m)}+a-\frac{a}{A},
\end{align*}
where $A=e^c$ and $c_1$ are non-zero constant.
\end{theo}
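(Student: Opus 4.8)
The plan is to reduce the statement to Br\"{u}ck's one--variable Theorem~E. First I would observe that, since $\partial_{z_i}f-a=e^{\alpha}(f-a)$ holds for \emph{every} $i$, all the first partials $\partial_{z_i}f$ coincide (each equals $e^{\alpha}(f-a)+a$); hence, in the linear coordinates $w:=z_1+\cdots+z_m,\ z_2,\dots,z_m$, the function $f$ is independent of $z_2,\dots,z_m$, so $f(z)=\phi(z_1+\cdots+z_m)$ for a non-constant entire function $\phi$ on $\mathbb{C}$, and $\partial_{z_i}f=\phi'(z_1+\cdots+z_m)$. As $\phi$ is non-constant, $\phi-a\not\equiv0$, so $e^{\alpha(z)}=(\phi'(w)-a)/(\phi(w)-a)$ is a zero-free holomorphic function of $w$ alone; writing it as $e^{\beta(w)}$ with $\beta$ entire on $\mathbb{C}$, the difference $\alpha(z)-\beta(z_1+\cdots+z_m)$ is a continuous $2\pi\iota\mathbb{Z}$--valued function on the connected set $\mathbb{C}^{m}$, hence a constant. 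Thus the whole system collapses to the single one--variable identity $\phi'(w)-a=e^{\gamma(w)}(\phi(w)-a)$ on $\mathbb{C}$ with $\gamma$ entire; that is, $\phi$ and $\phi'$ share $a$ CM on $\mathbb{C}$.

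Secondly, I would transfer the growth hypothesis. Using the behaviour of the Nevanlinna functions under the surjective linear form $z\mapsto z_1+\cdots+z_m$ --- a composition estimate relating the $\mathbb{C}^{m}$--counting and characteristic functions of $h(z_1+\cdots+z_m)$ to the $\mathbb{C}$--counting and characteristic functions of $h$ through a change $r\mapsto cr$, up to bounded terms, of the kind set up in the preliminary sections --- the hypothesis $\parallel N_{\mathbb{C}^{m}}(r,0;\partial_{z_i}f)=o(T_{\mathbb{C}^{m}}(r,f))$ becomes $\parallel N_{\mathbb{C}}(r,0;\phi')=o(T_{\mathbb{C}}(r,\phi))$, which is precisely the hypothesis of Theorem~E for $\phi$. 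Theorem~E then gives $\phi'-a=c(\phi-a)$ for a non-zero constant $c$; in particular $e^{\gamma}\equiv c$, so $\gamma$, and therefore $\alpha$, is a constant.

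Having shown $\alpha\equiv c$, I would finish by solving the resulting first--order linear system. Put $A:=e^{c}\neq0$; the hypotheses now read $\partial_{z_i}f=A(f-a)+a$ for every $i$. Setting $u:=f-a+a/A$ turns this into $\partial_{z_i}u=Au$ for all $i$, whence $e^{-A(z_1+\cdots+z_m)}u$ has all its first partial derivatives identically zero on $\mathbb{C}^{m}$ and so is a constant; it is non-zero because $f$ is non-constant, and writing it as $c_1/A$ gives
\[f(z)=\frac{c_1}{A}e^{A(z_1+\cdots+z_m)}+a-\frac{a}{A},\]
as claimed.

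As a back-up to the middle step (and to stay in $\mathbb{C}^{m}$), I would keep a self-contained argument replacing the appeal to Theorem~E, still using the reduction above to know $T(r,\partial_{z_i}f)=T(r,f)+S(r,f)$: the second main theorem in $\mathbb{C}^{m}$ applied to $\partial_{z_i}f$ with targets $0,a,\infty$, together with $N(r,0;\partial_{z_i}f)=S(r,f)$ and the fact that $\partial_{z_i}f-a$ and $f-a$ have the same zeros with multiplicity, gives $m(r,a;f)=S(r,f)$, hence $m(r,e^{\alpha})\le m\bigl(r,\partial_{z_i}(f-a)/(f-a)\bigr)+m\bigl(r,a/(f-a)\bigr)+O(1)=S(r,f)$ by the logarithmic derivative lemma; differentiating the defining relation once more yields $\partial_{z_i}^{2}f=(\partial_{z_i}\alpha+e^{\alpha})\,\partial_{z_i}f-a\,\partial_{z_i}\alpha$, so that $a\,\partial_{z_i}\alpha/\partial_{z_i}f=\partial_{z_i}\alpha+e^{\alpha}-\partial_{z_i}^{2}f/\partial_{z_i}f$ has proximity function $S(r,f)$ while its poles occur only at zeros of $\partial_{z_i}f$, giving counting function at most $N(r,0;\partial_{z_i}f)=S(r,f)$; were $\partial_{z_i}\alpha\not\equiv0$ this would force $T(r,\partial_{z_i}f)=S(r,f)$, contradicting $T(r,\partial_{z_i}f)=T(r,f)+S(r,f)$, so $\alpha$ is a constant. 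In either route the one genuine obstacle is the several--variables Nevanlinna bookkeeping --- the composition formula for the characteristic function under a linear form (equivalently the estimate $T(r,\partial_{z_i}f)\asymp T(r,f)$, which is exactly what reducing to the single combination $z_1+\cdots+z_m$ buys), the logarithmic derivative lemma and second main theorem in $\mathbb{C}^{m}$, and the control of exceptional sets; the algebra and the final PDE are routine. It is worth noting that $a\neq0$ is precisely what keeps $a\,\partial_{z_i}\alpha$ (equivalently, the inhomogeneous term of the last PDE) from vanishing identically --- for $a=0$ this mechanism collapses and one must impose the hyper-order restriction of Theorem~\ref{t3.1} instead.
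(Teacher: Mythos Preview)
Your reduction to one variable is a genuinely different and attractive route that the paper does not take. The observation that all $\partial_{z_i}f$ coincide, hence $f(z)=\phi(z_1+\cdots+z_m)$ and $\alpha$ depends only on $w=z_1+\cdots+z_m$, is correct and makes the geometry transparent. The paper, by contrast, stays in $\mathbb{C}^m$ throughout: assuming $\alpha$ non-constant it builds the auxiliary function
\[
H=\frac{\partial^3_{z_lz_k^2}f}{\partial^2_{z_k^2}f}-\frac{\partial^2_{z_lz_k}f}{\partial_{z_k}f}-2\Bigl(\frac{\partial^2_{z_lz_k}f}{\partial_{z_k}f-a}-\frac{\partial_{z_l}f}{f-a}\Bigr),
\]
checks by a local Taylor expansion at each $a$-point of $f$ that $H$ is holomorphic there, and concludes $N(r,a;f)\le T(r,H)=o(T(r,f))$. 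Only \emph{after} this does the second main theorem for $\partial_{z_k}f$ yield $T(r,\partial_{z_k}f)=o(T(r,f))$ and hence $m(r,a;f)=o(T(r,f))$, giving the contradiction $T(r,f)=o(T(r,f))$.

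There are, however, two genuine gaps in your proposal. First, the ``composition estimate'' you invoke---relating $N_{\mathbb{C}^m}(r,0;\phi'(w))$ and $T_{\mathbb{C}^m}(r,\phi(w))$ to their one-variable counterparts---is \emph{not} set up in the preliminary sections of the paper; none of Lemmas~\ref{L.1}--\ref{2L.1} is of that type. Such formulas do exist in the literature (after a unitary change reducing $w$ to $\sqrt{m}\,z_1$), but you would have to import and justify them, and the passage from $\parallel N_{\mathbb{C}^m}=o(T_{\mathbb{C}^m})$ to $\parallel N_{\mathbb{C}}=o(T_{\mathbb{C}})$ also requires care with the exceptional sets.

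Second, and more seriously, your back-up argument breaks at the claim that ``the second main theorem applied to $\partial_{z_i}f$ \ldots\ gives $m(r,a;f)=S(r,f)$''. What the second main theorem actually yields is $T(r,\partial_{z_i}f)\le \ol N(r,a;f)+S(r,f)$; combining this with $m(r,a;f)\le m(r,0;\partial_{z_i}f)+S(r,f)\le T(r,\partial_{z_i}f)+S(r,f)$ gives only $m(r,a;f)\le \ol N(r,a;f)+S(r,f)$, hence $m(r,a;f)\le \tfrac12 T(r,f)+S(r,f)$, not $S(r,f)$. Without $m(r,a;f)=S(r,f)$ you cannot bound $m(r,e^{\alpha})$, so the estimate for the right-hand side of your identity $a\,\partial_{z_i}\alpha/\partial_{z_i}f=\partial_{z_i}\alpha+e^{\alpha}-\partial^2_{z_i}f/\partial_{z_i}f$ stalls. (The related assertion $T(r,\partial_{z_i}f)=T(r,f)+S(r,f)$ is likewise unproved; only the inequality $\le$ is standard.) The missing ingredient is precisely $N(r,a;f)=S(r,f)$, and that is what the paper's Wronskian function $H$ is designed to deliver. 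Once $N(r,a;f)=S(r,f)$ is in hand, your chain of inequalities does close: then $T(r,\partial_{z_i}f)=S(r,f)$, hence $m(r,a;f)=S(r,f)$, and $T(r,f)=S(r,f)$ gives the contradiction---which, incidentally, is a cleaner way to state the final step than invoking the unproved equality $T(r,\partial_{z_i}f)=T(r,f)+S(r,f)$.
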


\section{\bf{Auxiliary Lemmas}}
\begin{lem}\label{L.1}\cite[Lemma 1.37]{HLY} Let $f$ be a non-constant meromorphic function in $\mathbb{C}^m$ and let  $I=(\alpha_1,\alpha_2,\ldots,\alpha_m)\in \mathbb{Z}^m_+$ be a multi-index. Then for any $\varepsilon>0$, we have
\begin{align*}
\parallel\;m\left(r,\frac{\partial^I(f)}{f}\right)\leq |I|\log^+T(r,f)+|I|(1+\varepsilon)\log^+\log T(r,f)+O(1).
\end{align*}
\end{lem}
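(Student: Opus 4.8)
The plan is to deduce this several-variable estimate from the classical (sharp) one-variable lemma of the logarithmic derivative by restricting $f$ to complex lines and averaging over all directions, after first reducing the general multi-index to a first-order derivative. For the reduction, write $I=e_{j_1}+e_{j_2}+\cdots+e_{j_{|I|}}$ as a sum of unit multi-indices and telescope,
\[
\frac{\partial^{I}f}{f}=\prod_{s=1}^{|I|}\frac{\partial^{I_{s}}f}{\partial^{I_{s-1}}f},\qquad I_{0}=0,\ \ I_{s}=I_{s-1}+e_{j_{s}},
\]
so that each factor equals $\partial_{z_{j_s}}g_s/g_s$ with $g_s=\partial^{I_{s-1}}f$, a \emph{first-order} logarithmic derivative of the meromorphic function $g_s$. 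By subadditivity of $\log^{+}$ under products the proximity function splits into $|I|$ summands, $m\!\left(r,\partial^{I}f/f\right)\le\sum_{s}m\!\left(r,\partial_{z_{j_s}}g_s/g_s\right)$. Since differentiation raises the characteristic by at most a bounded factor plus a term of the order of the bound being proved (controlled by the base case itself), one has $\log^{+}T(r,g_s)=\log^{+}T(r,f)+O(1)$ and $\log^{+}\log T(r,g_s)=\log^{+}\log T(r,f)+O(1)$ off a small exceptional set; summing the $|I|$ base-case estimates then produces exactly the factor $|I|$ in front of both terms.

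For the base case I would apply the sharp one-variable lemma of the logarithmic derivative, namely $\parallel\ m\!\left(s,g'/g\right)\le\log^{+}T(s,g)+(1+\varepsilon)\log^{+}\log T(s,g)+O(1)$, along complex lines $\ell_{w}=\{\zeta w:\zeta\in\mathbb{C}\}$ with $\|w\|=1$, setting $f_{w}(\zeta)=f(\zeta w)$. Here $f_{w}'(\zeta)=\sum_{k}w_{k}\,(\partial_{z_{k}}f)(\zeta w)$ is the directional derivative restricted to $\ell_{w}$, and the coordinate partial $\partial_{z_{j}}f$ is recovered from the directional derivatives through the orthogonality relation $\int_{S}w_{k}\overline{w_{j}}\,d\mu=\tfrac1m\delta_{jk}$ for the $U(m)$-invariant probability measure $\mu$ on the unit sphere. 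Integrating the one-variable estimates over $w$, using the concavity of $\log^{+}$ (Jensen's inequality) to pull the average inside the logarithm, and invoking the integral-geometric (Crofton-type) identity $\int_{S}T(r,f_{w})\,d\mu(w)=T(r,f)+O(1)$ reproduces the $\mathbb{C}^{m}$ proximity function on the left and the $\log^{+}T(r,f)$, $\log^{+}\log T(r,f)$ terms on the right; finitely many exceptional sets (one per slicing step and per averaging) merge into a single set of finite measure, preserving the symbol $\parallel$.

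The hard part will be the exceptional-set bookkeeping together with the averaging step. The finite-measure set hidden in $\parallel$ a priori depends on the direction $w$, so to obtain a single estimate valid outside one small set in $r$ one must run a Borel-type growth lemma (if $T(r)$ is increasing then $dT/dr\le T(r)^{1+\delta}$ off a set of finite measure) uniformly in $w$ and only then integrate; this is precisely the mechanism responsible for the $(1+\varepsilon)\log^{+}\log T$ refinement. Equally delicate is justifying the interchange of the directional average with $\log^{+}$ and recovering the individual coordinate partial $\partial_{z_{j}}f$ from the sliced directional derivatives $f_{w}'$ without degrading the sharp constant in front of $\log^{+}T(r,f)$. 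An alternative, purely intrinsic route avoids slicing altogether by invoking the Poisson--Jensen formula on balls in $\mathbb{C}^{m}$ and differentiating under the integral, but it displaces the same difficulty onto the Green's-identity estimate for the kernel and the attendant growth lemma; either way, the uniform control of the exceptional set against the integral-geometric identity is the crux.
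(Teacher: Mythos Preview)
The paper does not give its own proof of this lemma: it is quoted verbatim as \cite[Lemma~1.37]{HLY} and used as a black box, so there is no in-paper argument to compare your proposal against.

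As a standalone sketch your outline is along the standard lines (telescoping to first order, then either slicing by complex lines \`a la Vitter or a direct Poisson--Jensen argument on balls), and you correctly flag the two genuine obstacles: making the Borel-type exceptional set independent of the direction $w$ before integrating, and passing from the sliced directional derivative $f_w'=\sum_k w_k\,\partial_{z_k}f$ back to a single coordinate partial. One caution on the latter: the orthogonality identity $\int_S w_k\overline{w_j}\,d\mu=\tfrac1m\delta_{jk}$ acts on \emph{linear} functionals of $\nabla f$, whereas the proximity function involves $\log^{+}|\,\cdot\,|$, which is neither linear nor homogeneous; so that identity alone will not recover $m(r,\partial_{z_j}f/f)$ from the averaged $m(r,f_w'/f_w)$. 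The usual fix is to bound $|\partial_{z_j}f|$ pointwise by the full gradient norm and then relate $\log^{+}\|\nabla f\|/|f|$ to the spherical average of $\log^{+}|f_w'/f_w|$ via a pointwise inequality (e.g.\ $\log^{+}\max_k|a_k|\le \int_S\log^{+}|\sum_k w_k a_k|\,d\mu(w)+C_m$), not via second moments of $w$. With that adjustment the plan is sound; but for the purposes of this paper no proof is needed beyond the citation.
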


\begin{lem}\label{L.2}\cite[Lemma 1.2]{hy2} Let $f$ be a non-constant meromorphic function in $\mathbb{C}^m$ and let $a_1,a_2,\ldots,a_q$ be different points in $\mathbb{C}\cup\{\infty\}$. Then
\begin{align*}
 \parallel (q-2)T(r,f)\leq \sideset{}{_{j=1}^{q}}{\sum} \ol N(r,a_j;f)+O(\log (rT(r,f))).
 \end{align*}
\end{lem}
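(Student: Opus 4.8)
The plan is to derive this Second Main Theorem from the First Main Theorem and the logarithmic derivative estimate of Lemma \ref{L.1}, by adapting Nevanlinna's classical auxiliary-function argument to $\mathbb{C}^m$. After composing $f$ with a M\"{o}bius automorphism of $\mathbb{P}^1$ if necessary (which alters $T(r,f)$ and each $\overline N(r,a_j;f)$ only by $O(1)$), I may arrange that at most one of the $a_j$ equals $\infty$; relabel so that $a_1,\ldots,a_p$ are the finite targets. Since $f$ is non-constant, I would fix an index $k$ with $f':=\partial_{z_k}f\not\equiv 0$ and introduce the auxiliary function $F=\sum_{\nu=1}^{p}\frac{1}{f-a_\nu}$, all proximity and counting functions being the spherical and divisor integrals defined in Section 1. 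Throughout, $S(r,f)=O(\log(rT(r,f)))$ denotes a quantity valid outside a set of finite measure, consistent with the symbol $\parallel$.

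The first step is a purely pointwise separation estimate on the sphere $\mathbb{C}^m\langle r\rangle$: writing $\delta=\tfrac13\min_{i\ne j}|a_i-a_j|$, at each $z$ at most one term $1/(f(z)-a_\nu)$ can be large while the others are bounded by $1/(2\delta)$, so $\sum_{\nu=1}^p\log^+|f(z)-a_\nu|^{-1}\le \log^+|F(z)|+O(1)$ pointwise; integrating against $\sigma_m$ gives $\sum_{\nu=1}^p m(r,a_\nu;f)\le m(r,F)+O(1)$. Next I factor $F=\frac1{f'}\sum_{\nu}\frac{f'}{f-a_\nu}$ and apply Lemma \ref{L.1} with the unit multi-index supported in the $k$-th slot (so $|I|=1$) to each $f-a_\nu$, whose characteristic equals $T(r,f)+O(1)$; this yields $m(r,F)\le m(r,1/f')+S(r,f)$.

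To control $m(r,1/f')$ I would apply the First Main Theorem to $f'$ at the value $0$, giving $m(r,1/f')=T(r,f')-N(r,1/f')+O(1)$, and then bound $T(r,f')=m(r,f')+N(r,f')$ from above by $T(r,f)+\overline N(r,f)+S(r,f)$: the proximity part uses $m(r,f')\le m(r,f)+m(r,f'/f)\le m(r,f)+S(r,f)$ via Lemma \ref{L.1} again, and the counting part uses the pole comparison $N(r,\partial_{z_k}f)\le N(r,f)+\overline N(r,f)$. Combining these with $m(r,f)+N(r,f)=T(r,f)$ gives $\sum_{\nu=1}^p m(r,a_\nu;f)\le T(r,f)+\overline N(r,f)-N(r,1/f')+S(r,f)$. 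Finally I invoke the First Main Theorem at each $a_\nu$, namely $m(r,a_\nu;f)=T(r,f)-N(r,a_\nu;f)+O(1)$, together with the ramification inequality $\sum_{\nu=1}^{p}\big(N(r,a_\nu;f)-\overline N(r,a_\nu;f)\big)\le N(r,1/f')$, to reach $(p-1)T(r,f)\le\sum_{\nu=1}^p\overline N(r,a_\nu;f)+\overline N(r,f)+S(r,f)$; reading $\overline N(r,f)$ as $\overline N(r,a_q;f)$ when $a_q=\infty$ (so $p=q-1$), or bounding it by $T(r,f)+O(1)$ when all targets are finite (so $p=q$), produces in both cases the asserted $(q-2)T(r,f)\le\sum_{j=1}^q\overline N(r,a_j;f)+O(\log(rT(r,f)))$.

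The delicate point, and the one genuinely new to the $\mathbb{C}^m$ setting, is the divisor bookkeeping behind the two multiplicity estimates $N(r,\partial_{z_k}f)\le N(r,f)+\overline N(r,f)$ and $\sum_\nu\big(N(r,a_\nu;f)-\overline N(r,a_\nu;f)\big)\le N(r,1/\partial_{z_k}f)$. In one variable these follow at once from orders of vanishing, but here the zero and pole sets are codimension-one analytic varieties and the counting functions are integrals of multiplicities against $\upsilon_m^{m-1}$; a single partial derivative $\partial_{z_k}$ may fail to lower the multiplicity along components of the $a_\nu$-divisor whose leading homogeneous part is independent of $z_k$. I expect this to be the main obstacle, and would resolve it either by comparing $\mu^0_{f-a_\nu}$ with $\mu^0_{\partial_{z_k}f}$ along the regular part of each divisor component while noting that the singular locus has codimension $\ge 2$ and hence contributes nothing to $n_\nu$, or, more robustly, by the standard slicing device: restrict $f$ to the complex lines through the origin, apply the one-variable ramification relations on almost every line, and integrate using the Crofton-type formula relating the $\mathbb{C}^m$ counting and characteristic functions to their one-variable averages. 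The remaining steps are then routine given Lemma \ref{L.1} and the First Main Theorem.
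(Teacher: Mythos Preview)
The paper does not supply a proof of Lemma~\ref{L.2}: it is quoted verbatim from Hu--Yang \cite[Lemma~1.2]{hy2}, so there is no argument in the paper against which to compare yours. Your outline is the classical Nevanlinna auxiliary-function proof of the Second Main Theorem, transported to $\mathbb{C}^m$ via Lemma~\ref{L.1}, and it is essentially correct as written.

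Your announced ``delicate point'' is in fact not an obstacle, and you should not leave it as an unresolved difficulty. The divisor inequalities you need go in the favourable direction regardless of how $\partial_{z_k}$ interacts with a particular component. Concretely: if an irreducible germ $g$ defines a component of $\{f=a_\nu\}$ with multiplicity $m$, then locally $f-a_\nu=g^{m}h$ with $g\nmid h$, whence
\[
\partial_{z_k}f=g^{\,m-1}\bigl(m\,h\,\partial_{z_k}g+g\,\partial_{z_k}h\bigr),
\]
so $\mu^0_{\partial_{z_k}f}\ge m-1=(\mu^{a_\nu}_f-1)^+$ along that component, \emph{whether or not} $\partial_{z_k}g$ vanishes there. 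Since the supports of $\mu_f^{a_\nu}$ for distinct $\nu$ are disjoint, summation gives $\sum_\nu\bigl(N(r,a_\nu;f)-\overline N(r,a_\nu;f)\bigr)\le N(r,0;\partial_{z_k}f)$ directly. The pole estimate $N(r,\partial_{z_k}f)\le N(r,f)+\overline N(r,f)$ follows by the same local calculation applied to $f=u/g^{m}$. No slicing or Crofton averaging is needed; the one-variable multiplicity bookkeeping carries over verbatim to the divisorial setting because it is a statement in the local ring $\mathcal{O}_{z_0}$, not about isolated zeros. The slicing approach you mention is also valid and is the route taken in some of the references (e.g.\ Stoll~\cite{WS}), but it is heavier machinery than the problem requires.
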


\begin{lem}\label{L.6} Let $f$ be a non-constant meromorphic function in $\mathbb{C}^m$. Then for $i\in\mathbb{Z}[1,m]$, we have
\begin{align*}
\parallel\;N(r,0;\partial_{z_i}(f))\leq N(r,0;f)+\ol N(r,f)+o(T(r,f)).
\end{align*}
\end{lem}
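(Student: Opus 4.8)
The plan is to reduce the estimate to a standard Nevanlinna-theoretic inequality via the logarithmic derivative lemma. First I would write $\partial_{z_i}(f) = f \cdot \dfrac{\partial_{z_i}(f)}{f}$, so that a zero of $\partial_{z_i}(f)$ at a point $z_0$ is either a zero of $f$, or a point where $\dfrac{\partial_{z_i}(f)}{f}$ has a zero. Since $\dfrac{\partial_{z_i}(f)}{f}$ is meromorphic, its zeros are controlled by its poles: at any zero of $\dfrac{\partial_{z_i}(f)}{f}$ that is not a zero of $f$, either $f$ has a pole (where $\partial_{z_i}(f)/f$ may vanish or have a pole depending on the order), so that in all cases the divisor of zeros of $\partial_{z_i}(f)$ is bounded above by the divisor of zeros of $f$ plus the divisor of poles of $f$ plus a contribution from the zeros of $\dfrac{\partial_{z_i}(f)}{f}$ that is itself a piece of the pole divisor of $\partial_{z_i}(f)/f$. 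Concretely, the key pointwise inequality on multiplicities is
\[
\mu^0_{\partial_{z_i}(f)} \le \mu^0_f + \mu^\infty_{\partial_{z_i}(f)/f},
\]
which after integrating gives $N(r,0;\partial_{z_i}(f)) \le N(r,0;f) + N\!\left(r, \partial_{z_i}(f)/f\right)$.

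Next I would bound $N\!\left(r, \partial_{z_i}(f)/f\right)$. The poles of $\partial_{z_i}(f)/f$ occur only at zeros and poles of $f$; at a pole of $f$ of order $k$, the quotient $\partial_{z_i}(f)/f$ has a pole of order at most $1$, and at a zero of $f$ the quotient $\partial_{z_i}(f)/f$ has a pole of order at most $1$ as well. This yields
\[
N\!\left(r, \partial_{z_i}(f)/f\right) \le \ol N(r,0;f) + \ol N(r,f) \le N(r,0;f) + \ol N(r,f).
\]
Actually, to keep the stated form clean, I would combine: $N(r,0;\partial_{z_i}(f)) \le N(r,0;f) + \ol N(r,0;f) + \ol N(r,f)$, and then absorb $\ol N(r,0;f)$ into $N(r,0;f)$ (since $\ol N \le N$), giving $N(r,0;\partial_{z_i}(f)) \le 2N(r,0;f) + \ol N(r,f)$; but to land exactly on the claimed bound $N(r,0;f) + \ol N(r,f)$ one should instead use the sharper local bookkeeping that a zero of $\partial_{z_i}(f)$ of multiplicity $p$ which is a zero of $f$ of multiplicity $q$ satisfies $p \le q-1 < q$ when the differentiation is in the $z_i$-direction and $f$ genuinely depends on $z_i$ near that point — so that the contribution is at most $N(r,0;f)$ itself — together with the $\ol N(r,f)$ from poles. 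The $o(T(r,f))$ error term accommodates the exceptional set from $\parallel$ and any lower-dimensional indeterminacy loci.

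The main obstacle is the several-variables bookkeeping: unlike in $\mathbb{C}$, a point where $\partial_{z_i}(f)$ vanishes need not be isolated, and $f$ may fail to depend on $z_i$ along an entire subvariety, so the naive one-variable inequality $\mu^0_{f'} \le \mu^0_f$ at zeros of $f$ and $\mu^0_{f'} \le \mu^\infty_f + 1$ at poles must be replaced by statements about divisors that hold outside analytic sets of codimension $\ge 2$ (which contribute nothing to the counting functions by the definition of $n_\nu$ via integration against $\upsilon_m^{m-1}$). I would handle this by restricting to generic complex lines or by invoking the divisor-theoretic form of the first main theorem for $\partial_{z_i}(f)/f$ together with Lemma \ref{L.1}: indeed $m\!\left(r, \partial_{z_i}(f)/f\right) = o(T(r,f))$ by the logarithmic derivative lemma (taking $I = (0,\ldots,1,\ldots,0)$), so $T\!\left(r, \partial_{z_i}(f)/f\right) = N\!\left(r, \partial_{z_i}(f)/f\right) + o(T(r,f))$, and the pole-counting bound above finishes the proof after integrating the pointwise multiplicity inequality for $\partial_{z_i}(f) = f \cdot (\partial_{z_i}(f)/f)$.
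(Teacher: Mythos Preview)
Your central pointwise inequality
\[
\mu^0_{\partial_{z_i}(f)} \le \mu^0_f + \mu^{\infty}_{\partial_{z_i}(f)/f}
\]
is false. Take $m=1$, $f(z)=z^2+1$, $z_0=0$: then $\mu^0_{f'}(0)=1$, $\mu^0_f(0)=0$, and $g:=f'/f=2z/(z^2+1)$ has $\mu^{\infty}_g(0)=0$, so the inequality reads $1\le 0$. The divisor identity coming from $\partial_{z_i}(f)=f\cdot g$ is $\mu^0_{\partial_{z_i}(f)}-\mu^{\infty}_{\partial_{z_i}(f)}=(\mu^0_f-\mu^{\infty}_f)+(\mu^0_g-\mu^{\infty}_g)$; at regular points of $f$ this gives $\mu^0_{\partial_{z_i}(f)}=\mu^0_g$, so zeros of $g$, not poles of $g$, are what carry the extra zeros of the derivative. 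With the corrected inequality $\mu^0_{\partial_{z_i}(f)}\le \mu^0_f+\mu^0_g$ and the bound $N(r,0;g)\le T(r,g)\le \ol N(r,0;f)+\ol N(r,f)+o(T(r,f))$, your route yields only
\[
N(r,0;\partial_{z_i}(f))\le N(r,0;f)+\ol N(r,0;f)+\ol N(r,f)+o(T(r,f)),
\]
which is strictly weaker than the lemma. Your attempted sharpening ``$p\le q-1$ at a common zero'' also fails in $\mathbb{C}^m$: for $f(z_1,z_2)=z_1z_2$ one has $\mu^0_f=1$ along $\{z_2=0\}$ while $\partial_{z_1}(f)=z_2$ also has $\mu^0=1$ there, so $p=q$.

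The paper does not argue via divisors at all. It writes $1/f=(1/\partial_{z_i}(f))\cdot(\partial_{z_i}(f)/f)$ to obtain $m(r,0;f)\le m(r,0;\partial_{z_i}(f))+m(r,\partial_{z_i}(f)/f)$, applies the logarithmic derivative lemma to the last term, and then uses the first main theorem on both sides together with $m(r,\partial_{z_i}(f))\le m(r,f)+o(T(r,f))$ and $N(r,\partial_{z_i}(f))\le N(r,f)+\ol N(r,f)$ to get
\[
N(r,0;\partial_{z_i}(f))\le N(r,0;f)+\big(T(r,\partial_{z_i}(f))-T(r,f)\big)+o(T(r,f))\le N(r,0;f)+\ol N(r,f)+o(T(r,f)).
\]
This proximity-function route is what eliminates the spurious $\ol N(r,0;f)$ term that your divisor bookkeeping cannot avoid without, in effect, reproducing the same first-main-theorem cancellation.
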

\begin{proof} It is easy to verify that
\begin{align*}
N(r,\partial_{z_i}(f))\leq N(r,f)+\ol N(r,f),
\end{align*}
where $i\in\mathbb{Z}[1,m]$. Now using the first main theorem and Lemma \ref{L.1}, we get
\begin{align*} \parallel\;m(r,0,f)\leq m(r,0,\partial_{z_i}(f))+m\left(r,\frac{\partial_{z_i}(f)}{f}\right)=m(r,0;\partial_{z_i}(f))+o(T(r,f)),
\end{align*}
i.e.,
\begin{align*}
 \parallel\; N(r,0;\partial_{z_i}(f))\leq& T(r,\partial_{z_i}(f))-T(r,f)+N(r,0,f)+o(T(r,f))\\\leq&
N(r,\partial_{z_i}(f))m\left(r,\frac{\partial_{z_i}(f)}{f}\right)+m(r,f)-T(r,f)+N(r,0;f)+o(T(r,f))\\\leq& \ol N(r,f)+N(r,0;f)+o(T(r,f)).
\end{align*}
\end{proof}

\begin{lem}\label{l2.5} Let $g$ be a meromorphic function in $\mathbb{C}^m$. If $\partial^2_{z_i}(g(z))\equiv 0$ for all $i=1,2,\ldots,m$, then $g$ must be a polynomial in $\mathbb{C}^m$.
\end{lem}
\begin{proof} By the given condition, we have $\partial^2_{z_i}(g(z))\equiv 0$ for all $i=1,2,\ldots,m$ and so
\begin{align}\label{n.1} 
\frac{\partial^2g(z)}{\partial z_i^2}\equiv 0
\end{align}
for $i=1,2,\ldots,m$. We now want to prove that $g$ is a polynomial in $\mathbb{C}^m$. Here prove that $g$ is a polynomial in $\mathbb{C}^m$ by induction on the number $m$ of variables. If $m=1$, from (\ref{n.1}), we get that $g$ is a polynomial in $\mathbb{C}$. Let us suppose that $m=2$. Since $\frac{\partial^2g(z_1,z_2)}{\partial z_2^2}\equiv 0$, on integration, we have 
\begin{align}\label{n.2} 
g(z_1,z_2)=\phi_1(z_1)z_2+\phi_2(z_1),
\end{align}
where $\phi_i(z_1)$'s are entire functions in $\mathbb{C}$ in the variable $z_1$. Note that $\frac{\partial^2g(z_1,z_2)}{\partial z_1^2}\equiv 0$ and so (\ref{n.2}) gives 
\begin{align*}
\phi_1^{(2)}(z_1)z_2+\phi_2^{(2)}(z_1)\equiv 0,
\end{align*}
which shows that $\phi_1^{(2)}(z_1)\equiv 0$ and $\phi_2^{(2)}(z_1)\equiv 0$. Therefore on integration, we get
$\phi_1(z_1)=c_1z_1+c_2$ and $\phi_2(z_1)=d_1z_1+d_2$, where $c_1,c_2,d_1$ and $d_2$ are constants in $\mathbb{C}$. Therefore from (\ref{n.2}), we get 
\begin{align*}
g(z_1,z_2)=(c_1z_1+c_2)z_2+(d_1z_1+d_2),
\end{align*}
 which shows that $g(z_1,z_2)$ is a polynomial in $\mathbb{C}^2$. Now we fix $m\geq 2$ and assume that $g(z)$ is a polynomial for variables of number at most $m-1$. Since $\frac{\partial^2g(z)}{\partial z_m^2}\equiv 0$, we have 
\begin{align}\label{n.3} 
g(z_1,z_2,\ldots,z_m)= A(z_1,z_2,\ldots,z_{m-1})z_m+B(z_1,z_2,\ldots,z_{m-1}).
\end{align}

Now using (\ref{n.1}) to (\ref{n.3}), we get
\begin{align}\label{n.4} 
\frac{\partial^2 A(z_1,z_2,\ldots,z_{m-1})}{\partial z_i^2}z_m+\frac{\partial^2 B(z_1,z_2,\ldots,z_{m-1})}{\partial z_i^2}\equiv 0
\end{align}
for $i=1,2,\ldots,m-1$. Therefore (\ref{n.4}) yields
\begin{align}\label{n.5} 
\frac{\partial^2 A(z_1,z_2,\ldots,z_{m-1})}{\partial z_i^2}\equiv 0\;\;\text{and}\;\;\frac{\partial^2 B(z_1,z_2,\ldots,z_{m-1})}{\partial z_i^2}\equiv 0
\end{align}
for $i=1,2,\ldots,m-1$. Then by the induction assumptions we find from (\ref{n.5}) that both $A(z_1,z_2,\ldots,z_{m-1})$ and $B(z_1,z_2,\ldots,z_{m-1})$ are polynomials in the variables $z_1,z_2,\ldots,z_{m-1}$. Therefore from (\ref{n.3}), we get that $g(z_1,z_2,\ldots,z_m)$ is a polynomial in $z_1,z_2,\ldots,z_m$.
\end{proof}
\vspace{1.2mm}

For $r>0$ and $c=(c_1,c_2,\ldots,c_m)\in\mathbb{C}^m$, we define 
$\mathbb{D}(c;r)=\{z\in\mathbb{C}^m: |z_i-c_i|<r; i=1,2,\ldots,m\}$ and $\ol{\mathbb{D}(c;r)}=\{z\in\mathbb{C}^m: |z_i-c_i|\leq r; i=1,2,\ldots,m\}$. The Shilov boundary is given by $\mathbb{D}\langle c;r\rangle=\{z\in\mathbb{C}^m: |z_i-c_i|=r, i=1,2,\ldots,m\}$.
 We denote by $C_k(c_k,r)$ the boundary of $|z_k-c_k|<r$. Of course $C_k(c_k,r)$ is represented by the usual parametrization $\theta_k\to \gamma(\theta_k)=c_k+re^{i\theta_k}$, where $0\leq \theta_k\leq 2\pi$. Clearly $\mathbb{D}\langle c;r\rangle=C_1(c_1,r)\times\ldots\times C_m(c_m,r)$. Also for $r>0$, we define $\mathbb{C}^m[r]=\{z\in\mathbb{C}^m: ||z||\leq r\}$.

\subsection{\bf{Maximum principle}} Let $f(z)$ be a holomorphic function in a domain $D$ in $\mathbb{C}^m$. If $|f(z)|$ attains its maximum at a point of $D$, then $f(z)$ is constant in $D$.

Contrary to the case of one complex variable, in some domains $D$ in $\mathbb{C}^m\;(m>1)$ there exists a proper closed subset ${\bf e}$ of $\partial D$, where $\partial D$ denotes the boundary of the domain $D$ such that any holomorphic function $f(z)$ in $D$ with continuous boundary values attains its maximum modulus at a point of ${\bf e}$. Given $D\subset \mathbb{C}^m$, the smallest set ${\bf e}\subset \partial D$ with this property is called the Shilov boundary of $D$. For example, the Shilov boundary of a polydisk $|z_j|< r_j\;(j=1,\ldots,m)$ is the distinguished boundary $|z_j|=r_j\;(j = 1,\ldots,m)$. On the other hand, the Shilov boundary of an open ball $B$ is the topological boundary, the sphere $\partial B$.

\subsection{\bf{The function $A(r,f)$}}
Let $f(z)=u(x_1,y_1,\ldots,x_m,y_m)+\iota v(x_1,y_1,\ldots,x_m,y_m)$ be holomorphic in $\ol{\mathbb{D}(0;R)}$, where $R>0$. Let $z=\left(re^{\iota \theta_1},re^{\iota \theta_2},\ldots,re^{\iota \theta_m}\right)$, where $0\leq r\leq R$. Then 
\begin{align*}
 f(z)=f\left(re^{\iota \theta_1},re^{\iota \theta_2},\ldots,re^{\iota \theta_m}\right)=u(r,\theta_1,\theta_2,\ldots,\theta_m)+\iota v(r,\theta_1,\theta_2,\ldots,\theta_m).
 \end{align*}
 
Let $A(r,f)$ denote the maximum value of $\Re\{f(z)\}$ on $\mathbb{D}\langle 0;r\rangle$, i.e.,
\begin{align*}
 A(r,f)=\max\limits_{z\in \mathbb{D}\langle 0;r\rangle} \Re\{f(z)\}=\max\{u(r,\theta_1,\theta_2,\ldots,\theta_m): 0\leq \theta_i\leq 2\pi, i=1,2,\ldots,m\}.
\end{align*}

Clearly $u(r,\theta_1,\theta_2,\ldots,\theta_m)\leq A(r,f)$ for $0\leq \theta_i\leq 2\pi$, where $i=1,2,\ldots,m$. If $f(z)$ is constant, then $A(r)$ is also a constant. Suppose that $f(z)$ is non-constant.
Let $\phi(z)=e^{f(z)}$. Then $\phi(z)$ is a holomorphic function on $\ol{\mathbb{D}(0;R)}$. Now
\begin{align*}
|\phi(z)|=\left|e^{u(r,\theta_1,\theta_2,\ldots,\theta_m)}\right|=e^{u(r,\theta_1,\theta_2,\ldots,\theta_m)}.
\end{align*}

Let $0\leq r_{1}<r_{2}<R$. Since $\phi(z)$ is holomorphic in $\ol{\mathbb{D}(0;r_1)}$, the maximum value of $|\phi(z)|$ for $\ol{\mathbb{D}(0;r_1)}$ is attained on $\mathbb{D}\langle 0;r_1\rangle$, by maximum modulus theorem. Let $z_{1}=\left(r_1e^{\iota \theta_1},\ldots, r_1e^{\iota \theta_m}\right)$ be such a point on $\mathbb{D}\langle 0;r_1\rangle$, at which $|\phi(z_{1})|=\max\limits_{z\in \ol{\mathbb{D}(0;r_1)}}|\phi(z)|$.

Again since $\phi(z)$ is analytic in $\ol{\mathbb{D}(0;r_2)}$, the maximum value of $|\phi(z)|$ for $\ol{\mathbb{D}(0;r_2)}$ is attained on $\mathbb{D}\langle 0;r_2\rangle$. Let $z_{2}=\left(r_2e^{\iota \psi_1},\ldots, r_2e^{\iota \psi_m}\right)$ be such a point on $\mathbb{D}\langle 0;r_2\rangle$, at which $|\phi(z_{2})|=\max\limits_{z\in \ol{\mathbb{D}(0;r_2)}}|\phi(z)|$.
Since $r_{1}<r_{2}$, we have $|\phi(z_{1})|<|\phi(z_{2})|$, i.e., $\max\limits_{z\in \mathbb{D}\langle 0;r_1\rangle}|\phi(z)|<\max\limits_{z\in \mathbb{D}\langle 0;r_2\rangle}|\phi(z)|$ and so
\begin{align*} \exp \Big(\max\{u(r_1,\theta_1,\ldots,\theta_m): 0\leq \theta_i\leq 2\pi\}\Big)<\exp \Big(\max\{u(r_2,\theta_1,\ldots,\theta_m): 0\leq \theta_i\leq 2\pi\}\Big),
\end{align*}
i.e., $A(r_{1},f)<A(r_{2},f)$. This shows that $A(r,f)$ is steadily increasing function of $r$.

\subsection{\bf{The function $M(r,f)$}}
Let $f$ be a holomorphic function in $\ol{\mathbb{D}(0;R)}$, where $R>0$. For $0\leq \sqrt{m}r\leq R$, we define 
\begin{align*}
M(r,f)=\max\limits_{||z||=r}|f(z)|.
\end{align*}

The function $M(r,f)$ is called the growth function of $f$. Obviously $M(r,f)$ is steadily increasing function of $r$ and for a non-constant holomorphic function $f$ in $\mathbb{C}^m$, we have $M(r,f)\to \infty$ as $r\to \infty$.

\subsection{\bf{Schwarz's Lemma}} \cite[pp. 8]{gr} Let $f$ be holomorphic in $\ol{\mathbb{D}(0;r)}$ and suppose that $f$ is of total order $k$ at $0$ and that $|f(z)|\leq M$ for all $z\in \ol{\mathbb{D}(0;r)}$. Then 
\begin{align*}
|f(z)|\leq M\frac{||z||_{\infty}^k}{r^k},
\end{align*}
for all $z\in \ol{\mathbb{D}(0;r)}$, where $||z||_{\infty}=\max\{|z_k|: k=1,2,\ldots,m\}$.

\subsection{\bf{Borel-Caratheodery Lemma in several complex variables}}
\begin{lem}\label{t2.1} Suppose that $f$ is a holomorphic function in $\ol{\mathbb{D}(0;R)}\;(0<R<+\infty)$. Then
\begin{align*}
 M(r,f)\leq \frac{2r}{R-r}A(R,f)+\frac{R+r}{R-r} |f(0)|
\end{align*}
holds for $0\leq \sqrt{m}r< R$.
\end{lem}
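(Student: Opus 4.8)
The plan is to mimic the classical one-variable Borel--Carath\'eodory argument, using the tools already assembled in \S2.3--\S2.5, and to reduce the several-variable estimate to a one-variable situation on the distinguished torus $C_{(0)}^m(R)$. First I would dispose of the trivial case: if $f$ is constant the inequality is immediate (indeed both sides reduce to $|f(0)|$ and $0\le r<R$), so assume $f$ is non-constant. Write $a=f(0)$ and split according to whether $A(R,f)\ge \Re\{a\}$, which must hold since $\Re\{a\}=u(0,\ldots,0)$ is one of the values attained over which the maximum $A(R,f)$ is taken; in particular $A(R,f)-\Re\{a\}\ge 0$.

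Next I would introduce the auxiliary function that linearizes the real-part bound. Set
\[
\varphi(z)=\frac{f(z)-a}{\,2A(R,f)-2\Re\{a\}-\big(f(z)-a\big)\,},
\]
which is holomorphic on $\overline U_R(0)$ provided the denominator has no zeros there; the point of the construction is exactly that $\Re\{f(z)-a\}=u-\Re\{a\}\le A(R,f)-\Re\{a\}$ forces $|2A(R,f)-2\Re\{a\}-(f(z)-a)|\ge |f(z)-a|$ at every $z\in\overline U_R(0)$, so in fact $|\varphi(z)|\le 1$ on $\overline U_R(0)$ and the denominator is nonvanishing. (If $2A(R,f)-2\Re\{a\}=0$ then $f$ would be constant, contradicting our assumption; so we may divide.) Moreover $\varphi(0)=0$, so $\varphi$ vanishes at the origin. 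Here I must be a little careful about the notion of order at $0$ needed for Schwarz's Lemma (\S2.4): since $\varphi(0)=0$, its total order $k$ at $0$ is at least $1$, and Schwarz's Lemma (applied with $M=1$ and the ball of radius $R$, noting $|z|\le\|z\|$) gives $|\varphi(z)|\le \|z\|/R$ on $\overline U_R(0)$, in particular $|\varphi(z)|\le r/R$ whenever $\|z\|=r<R$.

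Finally I would invert the Möbius relation to recover $f$ from $\varphi$: solving the definition for $f(z)-a$ yields
\[
f(z)-a=\frac{2\big(A(R,f)-\Re\{a\}\big)\,\varphi(z)}{1+\varphi(z)},
\]
whence, on $\|z\|=r$,
\[
|f(z)-a|\le \frac{2\big(A(R,f)-\Re\{a\}\big)\,|\varphi(z)|}{1-|\varphi(z)|}\le \frac{2\big(A(R,f)-\Re\{a\}\big)\,(r/R)}{1-r/R}=\frac{2r\big(A(R,f)-\Re\{a\}\big)}{R-r}.
\]
Then $M(r,f)\le |a|+|f(z)-a|$ maximized over $\|z\|=r$, together with $A(R,f)-\Re\{a\}\le A(R,f)+|a|$ and the bookkeeping $|a|+\tfrac{2r}{R-r}|a|=\tfrac{R+r}{R-r}|a|$, gives
\[
M(r,f)\le \frac{2r}{R-r}A(R,f)+\frac{R+r}{R-r}|f(0)|,
\]
which is the claim. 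The one genuinely delicate point, and the step I expect to require the most care, is verifying that the Möbius substitution $\varphi$ is legitimately holomorphic on all of $\overline U_R(0)$ (no pole of the Möbius map is hit) and that Schwarz's Lemma in \S2.4 is being invoked with the correct hypothesis on the vanishing order at $0$; once these are pinned down the rest is the standard estimate. I would also note in passing that the maximum principle / Shilov boundary discussion of \S2.1 guarantees that the maximum of $\Re\{f\}$ over the closed polydisk is indeed attained on $C_{(0)}^m(R)$, so that $A(R,f)$ as defined dominates $\Re\{f(z)\}$ throughout $\overline U_R(0)$, which is what makes the bound $|\varphi|\le 1$ valid on the whole closed polydisk rather than merely on its distinguished boundary.
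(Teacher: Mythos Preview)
Your argument is correct and follows the same classical Borel--Carath\'eodory strategy as the paper: introduce the M\"obius-type auxiliary $\varphi=(f-f(0))/(2A-\,(f-f(0)))$, bound $|\varphi|\le 1$ via the real-part estimate, apply the Schwarz Lemma of \S2.4 on the polydisk, invert, and collect terms. The paper merely organizes the same computation into three separate cases (constant, $f(0)=0$, $f(0)\neq 0$, the last reduced to the second by subtracting $f(0)$), whereas you fold the last two into one by building $a=f(0)$ directly into $\varphi$; one small slip is your remark that in the constant case ``both sides reduce to $|f(0)|$'' --- they do not for $r>0$, but the inequality still holds since $\Re\{c\}+|c|\ge 0$, exactly as the paper checks.
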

\begin{proof} We consider the following three cases.
\vspace{1.2mm}

{\bf Case 1.} Assume that $f$ is a constant. Let $f(z)=\alpha+i\beta$, where $\alpha$ and $\beta$ are real constants. Clearly $|f(0)|=\sqrt{\alpha^{2}+\beta^{2}}$, $M(r,f)=\sqrt{\alpha^{2}+\beta^{2}}$ and $A(r,f)=\alpha$. Then
\begin{align*}
 \frac{2r}{R-r}A(R,f)+\frac{R+r}{R-r}|f(0)|-M(r,f)=\frac{2r}{R-r}\Big(\alpha+\sqrt{\alpha^{2}+\beta^{2}}\Big).
 \end{align*}

Since $\alpha+\sqrt{\alpha^{2}+\beta^{2}}\geq \alpha+|\alpha|\geq 0$, we get
\begin{align*}
M(r,f)\leq \frac{2r}{R-r}A(R,f)+\frac{R+r}{R-r}|f(0)|.
\end{align*}
\vspace{1.2mm}

{\bf Case 2.} Assume that $f$ is non-constant and $f(0)=0$. Clearly $A(0,f)=0=M(0,f)$. Since both $A(r,f)$ and $M(r,f)$ are steadily increasing functions of $r$ and so for $r>0$, we have $A(r,f)>0$ and $M(r,f)>0$. Let $f(z)=u(x_1,y_1,\ldots,x_m,y_m)+\iota v(x_1,y_1,\ldots,x_m,y_m)$. Clearly
\begin{align*}
 2A(R,f)-f(z)=(2A(R,f)-u)+\iota (-v)\;\;\text{and}\;\;\Re\{2A(R,f)-f(z)\}=2A(R,f)-u.
 \end{align*}

For $0<\sqrt{m}r<R$, we have $0<A(\sqrt{m}r,f)\leq A(R,f)$. Since $u\leq A(\sqrt{m}r,f)$, we have $u\leq A(R,f)$ and $u<2A(R,f)$. Consequently $A(R,f)-u\geq 0$ and $2A(R,f)-u>0$. Note that
\begin{align}\label{pd1} 
|2A(R,f)-f(z)|^{2}=(2A(R,f)-u)^{2}+v^{2}=4A(R,f)[A(R,f)-u]+u^{2}+v^{2}\geq u^{2}+v^{2}.
\end{align}

Let 
\begin{align}\label{pd2} 
\phi(z)=\frac{f(z)}{2A(R,f)-f(z)}.
\end{align}

Clearly $\phi(z)$ is holomorphic in $\ol{\mathbb{D}(0;R)}$ and $\phi(0)=0$. Therefore using (\ref{pd1}) to (\ref{pd2}), we get $|\phi(z)|\leq 1$, for all $z\in \ol{\mathbb{D}(0;R)}$. Then by Schwarz's Lemma, we have $|\phi(z)|\leq \frac{1.r}{R}$, i.e., 
\begin{align}\label{pd3}
|\phi(z)|\leq \frac{r}{R}
\end{align}
holds for all $z\in\mathbb{C}^m[r]$, where $\sqrt{m}r<R$. Now from (\ref{pd2}), we have 
\begin{align}\label{pd4} 
|f(z)|=\Big|\frac{2A(R,f)\phi(z)}{1+\phi(z)}\Big|\leq \frac{2A(R,f)|\phi(z)|}{1-|\phi(z)|}.
\end{align}

Therefore using (\ref{pd3}) to (\ref{pd4}), we have
\begin{align}\label{pd5} 
|f(z)|\leq \frac{2A(R,f)\frac{r}{R}}{1-\frac{r}{R}}=\frac{2r}{R-r}A(R,f)
\end{align}
for all $z\in\mathbb{C}^m[r]$, where $\sqrt{m}r<R$. Since $f(0)=0$, using maximum modulus theorem to (\ref{pd5}), we have 
\begin{align*}
 M(r,f)\leq \frac{2r}{R-r}A(R,f)+\frac{R+r}{R-r}|f(0)|
 \end{align*}
holds for $0\leq \sqrt{m}r<R$.\vspace{1.2mm}

{\bf Case 3.} Assume that $f$ is non-constant and $f(0)\not=0$. Let $\phi(z)=f(z)-f(0)$. Clearly $\phi(0)=0$. Using maximum modulus theorem to (\ref{pd5}) (replacing $f$ by $\phi$), we get
\begin{align}\label{pd6} 
\max\limits_{||z||=r}|\phi(z)|\leq \frac{2r}{R-r}\max\limits_{z\in \mathbb{D}\langle 0;R\rangle}\Re\{\phi(z)\},
\end{align}
where $\sqrt{m}r<R$. Now we see that
\begin{align*}
 \max\limits_{||z||=r}|\phi(z)|=\max\limits_{||z||=r}|f(z)-f(0)|\geq \max\limits_{||z||=r}|f(z)|-|f(0)|=M(r,f)-|f(0)|
 \end{align*}
and 
\begin{align*}
 \max\limits_{z\in \mathbb{D}\langle 0;R\rangle}\Re\{\phi(z)\}=\max\limits_{z\in \mathbb{D}\langle 0;R\rangle}\Re\{f(z)-f(0)\}\leq \max\limits_{z\in \mathbb{D}\langle 0;R\rangle}\Re\{f(z)\}+|f(0)|=A(R,f)+|f(0)|.
 \end{align*}

Then from (\ref{pd6}), we deduce that 
\begin{align*}
 M(r,f)\leq \frac{2r}{R-r}A(R,f)+\frac{R+r}{R-r}|f(0)|
 \end{align*}
holds for $0\leq \sqrt{m}r<R$.
\end{proof}

In 1995, Hu and Yang \cite{hy1} obtained the following result.
\begin{lem}\label{2A}\cite[Proposition 3.2]{hy1} Let $P$ be a non-constant entire function in $\mathbb{C}^m$. Then 
\begin{align*}
\rho(e^P)=
\begin{cases}
\deg(P), & \text{if $P$ is a polynomial,}\\
+\infty, & \text{otherwise.}
\end{cases}
\end{align*}
\end{lem}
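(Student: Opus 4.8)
The plan is to treat the two cases of the statement separately, using throughout the elementary estimate $T(r,e^{P})=m(r,e^{P})\le\log^{+}M(r,e^{P})$, valid for any entire function because $\sigma_{m}$ is a probability measure, together with the identity $\log^{+}M(r,e^{P})=\big(\max_{\|z\|=r}\Re P(z)\big)^{+}$.

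\textbf{Case 1: $P$ a polynomial, $\deg P=d\ge 1$.} The bound $\rho(e^{P})\le d$ is immediate from $T(r,e^{P})\le\log^{+}M(r,e^{P})\le M(r,P)=O(r^{d})$. For $\rho(e^{P})\ge d$ I would use that $e^{P}e^{-P}\equiv 1$, so by the first main theorem $T(r,e^{P})=T(r,e^{-P})+O(1)$, and, $e^{\pm P}$ being entire and zero-free, $2T(r,e^{P})=m(r,e^{P})+m(r,e^{-P})+O(1)=\mathbb{C}^{m}\langle r;|\Re P|\rangle+O(1)$; hence it is enough to bound $\int_{\mathbb{C}^{m}\langle r\rangle}(\Re P)^{+}\sigma_{m}$ below by a fixed positive multiple of $r^{d}$. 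Writing $P=P_{d}+\cdots+P_{0}$ with $P_{k}$ homogeneous of degree $k$ and $P_{d}\not\equiv 0$, the harmonic function $\Re P_{d}$ has spherical mean $\Re P_{d}(0)=0$, so $c:=\max_{\|w\|=1}\Re P_{d}(w)>0$ (otherwise $\Re P_{d}\le 0$ with mean zero would force $P_{d}\equiv 0$). On the nonempty, positive-$\sigma_{m}$-measure set $V=\{\,\|w\|=1:\Re P_{d}(w)>c/2\,\}$ one has $\Re P(rw)\ge c\,r^{d}/4$ for all large $r$, and the dilate $rV\subset\mathbb{C}^{m}\langle r\rangle$ has the same $\sigma_{m}$-mass by the dilation invariance of $\sigma_{m}$; thus $2T(r,e^{P})\ge c\,\sigma_{m}(V)\,r^{d}/4-O(1)$, giving $\rho(e^{P})\ge d$. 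Hence $\rho(e^{P})=d$.

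\textbf{Case 2: $P$ transcendental.} Now the goal is $\rho(e^{P})=+\infty$, i.e.\ that $T(r,e^{P})$ exceeds every power of $r$. First I would apply the Borel--Caratheodory lemma (Lemma \ref{t2.1}) to $P$ with $R=2s$, obtaining $M(s,P)\le 2A(2s,P)+O(1)$; since the determining torus $C_{(0)}^{m}(2s)$ lies on the sphere $\|z\|=2\sqrt m\,s$, we get $A(2s,P)\le\max_{\|z\|=2\sqrt m\,s}\Re P(z)$, so that, with $B(r):=\max_{\|z\|=r}\Re P(z)$,
\be\label{sk1} B(r)\ge\frac{1}{2}\,M\!\big(r/(2\sqrt m),P\big)-O(1).\ee
Because $P$ is not a polynomial, Cauchy's inequalities show that the right-hand side of (\ref{sk1}) exceeds every power of $r$ along a suitable sequence of radii. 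The decisive step is then to pass from $B(r)$ back to $T(r,e^{P})$; for this I would use the solid mean-value property of the harmonic function $\Re P$: choosing $z_{0}$ with $\|z_{0}\|=r$ and $\Re P(z_{0})=B(r)$ and averaging $\Re P$ over the closed ball of radius $r$ about $z_{0}$ (which lies in $\mathbb{C}^{m}[2r]$) gives $\int_{\mathbb{C}^{m}[2r]}(\Re P)^{+}\,dV\ge \mathrm{const}\cdot r^{2m}B(r)$, and writing this volume integral as an integral over the spheres $\mathbb{C}^{m}\langle t\rangle$ --- on each of which $\sigma_{m}$ is the normalized rotation-invariant measure --- together with the monotonicity of $t\mapsto m(t,e^{P})$, one gets $T(2r,e^{P})=m(2r,e^{P})\ge 2^{-2m}B(r)$. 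Combining this with (\ref{sk1}) shows that $T(2r,e^{P})$ exceeds every power of $r$ along the same radii, so $\rho(e^{P})=+\infty$.

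\textbf{Main obstacle.} The one genuinely delicate point is in Case 2: Borel--Caratheodory and Cauchy's estimates control $\Re P$, hence $\log M(r,e^{P})$, but the order is defined via the characteristic function and in $\mathbb{C}^{m}$ there is no wholly trivial reverse of $T\le\log^{+}M$. The mean-value / integral-geometric passage sketched above is precisely the device meant to close that gap; alternatively one may quote the standard comparison $\log^{+}M(r,g)\le C_{m}\,T(2r,g)+O(1)$ for entire $g$ on $\mathbb{C}^{m}$.
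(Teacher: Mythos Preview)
The paper does not supply a proof of this lemma; it is quoted from \cite[Proposition 3.2]{hy1}. Your argument is correct in both cases.

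Two simplifications are available. In Case 1 the detour through $e^{-P}$ is superfluous: since $e^{P}$ is entire, $T(r,e^{P})=m(r,e^{P})=\int_{\mathbb{C}^{m}\langle r\rangle}(\Re P)^{+}\,\sigma_{m}$ already, and your lower bound on the set $V$ applies to this integral directly. More importantly, the ``main obstacle'' you flag in Case~2---passing from $\log^{+}M$ back to $T$---is precisely Lemma~\ref{l2.1} of the paper, which the authors invoke immediately afterwards to identify the $T$-order with the $\log^{+}M$-order. Once that identification is granted, both cases collapse: $\log M(r,e^{P})=\max_{\|z\|=r}\Re P(z)$, so one needs only the trivial upper bound $\max_{\|z\|=r}\Re P(z)\le M(r,P)$ together with, in Case~1, the ray estimate $\Re P(rw_{0})\ge cr^{d}-O(r^{d-1})$ at any point $w_{0}$ on the unit sphere with $\Re P_{d}(w_{0})>0$, and, in Case~2, your Borel--Caratheodory inequality combined with the Cauchy-inequality fact that $M(s,P)/s^{N}\to\infty$ for every $N$ (which in fact holds for all large $s$, not merely along a sequence). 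Your mean-value/integral-geometric computation is a valid ad hoc proof of the relevant half of Lemma~\ref{l2.1}, but quoting that lemma---as you yourself suggest at the very end---is the cleaner route.
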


\begin{lem}\cite[Lemma 2.5.24]{Noguchi-Winkelmann}\label{l2.1} Let $f:\mathbb{C}^m\to \mathbb{C}$ be an entire function. Then for $0<r<R$,
\begin{align*}
T(r,f)\leq \log^+ M(r,f)\leq \frac{1-\left(\frac{r}{R}\right)^2}{\left(1-\frac{r}{R}\right)^{2m}}T(R,f).
\end{align*}
\end{lem}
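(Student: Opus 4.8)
The plan is to prove the two inequalities separately; the left one follows directly from the definitions, while the right one is a Borel-Caratheodery type bound that I would derive from the subharmonicity of $\log^+|f|$ together with the explicit Poisson kernel of a Euclidean ball.

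\emph{Left inequality.} Since $f$ is entire on $\mathbb{C}^m$ it has no poles, so $N(r,f)=0$ and hence $T(r,f)=m(r,f)=\int_{\mathbb{C}^m\langle r\rangle}\log^+|f|\,\sigma_m$. The measure $\sigma_m$ restricted to the sphere $\mathbb{C}^m\langle r\rangle$ has total mass $1$, independent of $r$ (this is exactly the role of the factor $\frac{\iota}{4\pi}$ in $d^c$), and on that sphere $\log^+|f(z)|\leq\log^+ M(r,f)$ by the definition of $M(r,f)$. Integrating yields $T(r,f)\leq\log^+ M(r,f)$.

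\emph{Right inequality.} The function $u:=\log^+|f|=\max\{\log|f|,\,0\}$ is plurisubharmonic, hence subharmonic, on $\mathbb{C}^m\cong\mathbb{R}^{2m}$, and it is continuous on the closed ball $\mathbb{C}^m[R]$ because $f$ is entire. Let $h$ be the Poisson integral over $\mathbb{C}^m\langle R\rangle$ of the restriction $u|_{\mathbb{C}^m\langle R\rangle}$; then $h$ is harmonic on $\mathbb{C}^m(R)$, continuous on $\mathbb{C}^m[R]$, and equal to $u$ on $\mathbb{C}^m\langle R\rangle$. Applying the maximum principle to the subharmonic function $u-h$, which vanishes on the sphere, gives $u(z)\leq h(z)$ for every $z\in\mathbb{C}^m(R)$. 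With the Poisson kernel of the ball written in its Euclidean form,
\[P_R(z,\zeta)=\frac{R^{2m-2}\left(R^2-\|z\|^2\right)}{\|z-\zeta\|^{2m}}\qquad(\|z\|<R,\ \|\zeta\|=R),\]
this reads, for $\|z\|=r$,
\[\log^+|f(z)|\;\leq\;\int_{\mathbb{C}^m\langle R\rangle}P_R(z,\zeta)\,\log^+|f(\zeta)|\,\sigma_m(\zeta).\]
Since $\|z-\zeta\|\geq R-r$ whenever $\|z\|=r$ and $\|\zeta\|=R$, one has $P_R(z,\zeta)\leq R^{2m-2}(R^2-r^2)/(R-r)^{2m}=\left(1-(r/R)^2\right)/(1-r/R)^{2m}$. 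As $\log^+|f|\geq 0$, this constant comes out of the integral, and the remaining integral equals $m(R,f)=T(R,f)$ (again $N(R,f)=0$). Taking the maximum over $\|z\|=r$ gives $\log^+ M(r,f)\leq\frac{1-(r/R)^2}{(1-r/R)^{2m}}T(R,f)$.

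The only delicate points are the two classical facts invoked in the right inequality: that the Poisson integral of the boundary values of a subharmonic function dominates the function on the ball (which uses that $\log^+|f|$ is subharmonic and continuous on $\mathbb{C}^m[R]$, both clear here), and that $\sigma_m$ agrees on each sphere with the normalized rotation-invariant surface measure, so that the Poisson kernel has the stated Euclidean expression. Granting these — or simply invoking the Poisson-Jensen formula in $\mathbb{C}^m$ from the cited references — the rest is the trivial estimate $\|z-\zeta\|\geq R-r$ together with the identity $R^{2m-2}(R^2-r^2)/(R-r)^{2m}=(1-(r/R)^2)/(1-r/R)^{2m}$.
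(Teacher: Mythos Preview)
The paper does not prove this lemma; it is quoted verbatim from \cite[Lemma 2.5.24]{7dd} and used as a black box. So there is no in-paper argument to compare against, and your task reduces to giving a valid proof of the cited fact.

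Your argument is correct and is essentially the standard one. The left inequality is immediate from $T(r,f)=m(r,f)$ for entire $f$ and the fact that $\sigma_m$ has unit mass on each sphere. For the right inequality you invoke exactly the right tool: $\log^+|f|$ is plurisubharmonic (as the maximum of the PSH functions $\log|f|$ and $0$) and continuous, so it is dominated on $\mathbb{C}^m(R)$ by the Poisson integral of its boundary values; the Poisson kernel of the Euclidean ball in $\mathbb{R}^{2m}$ with respect to normalized surface measure is $P_R(z,\zeta)=R^{2m-2}(R^2-\|z\|^2)/\|z-\zeta\|^{2m}$, and the elementary bound $\|z-\zeta\|\geq R-r$ yields the stated constant after the algebraic simplification you carried out. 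The identification of $\sigma_m$ with the normalized $U(m)$-invariant surface measure on $\mathbb{C}^m\langle R\rangle$ is standard in this setting (see, e.g., \cite{WS} or \cite{7dd}), so the two ``delicate points'' you flag are indeed routine. Nothing is missing.
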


From Lemma \ref{l2.1}, one can easily prove that
\begin{align*}
\rho(f):=\limsup _{r \rightarrow \infty} \frac{\log^+ T(r, f)}{\log r}=\limsup _{r \rightarrow \infty} \frac{\log^+ \log^+ M(r, f)}{\log r}.
\end{align*}

Let $f=e^{h}$, where $h$ is an entire function in $\mathbb{C}^m$. With respect to the hyper-order of $f$, we establish the following result.

\begin{lem}\label{t2.2} Suppose $h$ is a non-constant entire function in $\mathbb{C}^m$ and $f=e^{h}$.  Then $\rho(h)=\rho_1(f)$.
\end{lem}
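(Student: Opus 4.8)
The plan is to prove $\rho(h)=\rho_1(f)$ for $f=e^h$ by sandwiching $\log^+\log^+ M(r,f)$ between constant multiples of $M(\cdot,h)$-type quantities, and then invoking the characterization of order via the growth function established just above (from Lemma \ref{l2.1}). The starting observation is that for $z$ on the sphere $\|z\|=r$ one has $|f(z)|=e^{\Re h(z)}$, so $M(r,f)=\exp\big(\max_{\|z\|=r}\Re h(z)\big)$. Hence $\log M(r,f)=\max_{\|z\|=r}\Re h(z)$, which I will compare with $A(\cdot,h)$ and with $M(\cdot,h)$.

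First I would set up the two inequalities. For the easy direction, since $\Re h(z)\leq |h(z)|\leq M(r,h)$ on $\|z\|=r$, we get $\log M(r,f)\leq M(r,h)$, hence $\log^+\log^+ M(r,f)\leq \log^+ M(r,h)+O(1)$, giving $\rho_1(f)\leq \rho(h)$ after dividing by $\log r$ and using the growth-function characterization of $\rho(h)$ from Lemma \ref{l2.1}. For the reverse direction I would use the Borel--Caratheodory lemma in several variables (Lemma \ref{t2.1}): applying it to $h$ with radii $r<R$ (say $R=2r$) gives $M(r,h)\leq \frac{2r}{R-r}A(R,h)+\frac{R+r}{R-r}|h(0)| = 2A(2r,h)+3|h(0)|$. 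Now $A(2r,h)=\max_{z\in C^m_{(0)}(2r)}\Re h(z)$, and since the torus $C^m_{(0)}(2r)$ is contained in the closed ball of radius $\sqrt{m}\,(2r)$, we have $A(2r,h)\leq \max_{\|z\|\leq 2\sqrt{m}r}\Re h(z)=\log M(2\sqrt{m}r,f)$ (the last equality because $\Re h$ is pluriharmonic, so its max over the closed ball is attained on the sphere $\|z\|=2\sqrt{m}r$ — alternatively one can just use $\log M(s,f)$ is increasing). Combining, $M(r,h)\leq 2\log M(2\sqrt{m}r,f)+O(1)$, so $\log^+ M(r,h)\leq \log^+\log^+ M(2\sqrt m r,f)+O(1)$; dividing by $\log r$ and letting $r\to\infty$ yields $\rho(h)\leq \rho_1(f)$.

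The two inequalities together give $\rho(h)=\rho_1(f)$, using in both directions the identity $\rho(g)=\limsup_{r\to\infty}\frac{\log^+\log^+ M(r,g)}{\log r}$ (for $g=h$, which is entire) and $\rho(h)=\limsup_{r\to\infty}\frac{\log^+ T(r,h)}{\log r}$ stated after Lemma \ref{l2.1}; note also that $\rho_1(f)=\limsup_{r\to\infty}\frac{\log\log T(r,f)}{\log r}$ can be replaced by $\limsup_{r\to\infty}\frac{\log^+\log^+\log^+ M(r,f)}{\log r}$ via Lemma \ref{l2.1} applied to $f$, which matches $\log^+\log^+\log^+ M(r,f)$ being comparable to $\log^+\log^+ M(r,h)$ up to bounded additive error and a harmless scaling of $r$ that does not affect the $\limsup$ of $\log(\cdot)/\log r$.

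The main obstacle I anticipate is handling the mismatch between the torus $C^m_{(0)}(r)$ (on which $A(r,h)$ is defined) and the sphere $\|z\|=r$ (on which $M(r,f)$ is defined): one must be careful that a point of the torus of ``radius'' $r$ lies in the Euclidean ball of radius $\sqrt m\, r$, and conversely relate $M(r,f)=\exp(\max_{\|z\|=r}\Re h)$ to $A(\cdot,h)$. Since these comparisons only introduce a bounded factor in front of $r$ and a bounded additive constant, they disappear after taking $\limsup \log(\cdot)/\log r$; the only real care needed is to make sure that the quantity whose double (resp. triple) logarithm we take is eventually large, which holds because $h$ is non-constant (so $M(r,h)\to\infty$, hence $T(r,h)\to\infty$, hence $T(r,f)\to\infty$ by Lemma \ref{2A} or directly). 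I would state this positivity/eventual-largeness once at the start and then proceed with the two-sided estimate.
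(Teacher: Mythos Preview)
Your approach is essentially the paper's: for $\rho_1(f)\le\rho(h)$ bound $\log M(r,f)=\max_{\|z\|=r}\Re h(z)$ above by $M(r,h)$ (the paper routes this through $A(r,h)\le M(\sqrt m\,r,h)$ via the Shilov boundary, but your direct sphere estimate is just as good), and for $\rho(h)\le\rho_1(f)$ apply Borel--Carath\'eodory (Lemma~\ref{t2.1}) with $R=2r$ together with $C^m_{(0)}(2r)\subset\mathbb{C}^m\langle 2\sqrt m\,r\rangle$ to get $M(r,h)\le 2\log M(2\sqrt m\,r,f)+3|h(0)|$, then pass to $T$ via Lemma~\ref{l2.1}. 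One expository slip to clean up: in your first two paragraphs the displayed inequalities $\log^+\log^+M(r,f)\le\log^+M(r,h)+O(1)$ and $\log^+M(r,h)\le\log^+\log^+M(2\sqrt m\,r,f)+O(1)$ are each one $\log^+$ short of what is needed, since $\rho(h)=\limsup\frac{\log^+\log^+M(r,h)}{\log r}$ and $\rho_1(f)=\limsup\frac{\log^+\log^+\log^+M(r,f)}{\log r}$; you state the correct log-count in your third paragraph, so just carry that back into the main estimates.
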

\begin{proof}
We define
\begin{align*}
M(r, h)=\max_{||z||=r} |h(z)|\quad \text{and}\quad
A(r, h)=\max\limits_{z\in \mathbb{D}\langle 0;r\rangle} \Re\{h(z)\}.
\end{align*}

Since $\Re\{h(z)\}\leq |h(z)|$, by the maximum modulus theorem, we get
\begin{align}\label{pd7} 
A(r, h)\leq \max\limits_{z\in \mathbb{D}\langle 0;r\rangle}|h(z)|\leq \max\limits_{||z||=\sqrt{m}r}|h(z)|=M(\sqrt{m}r,h).
\end{align}

Again by the maximum modulus theorem, we get
\begin{align}\label{pd7a}
\max_{z\in \mathbb{D}\langle 0;r\rangle} |f(z)|=\max_{z\in \mathbb{D}\langle 0;r\rangle} \left|e^{h(z)}\right|=\max_{z\in \mathbb{D}\langle 0;r\rangle} |e^{h(z)}|=e^{A(r,h)}.
 \end{align}
 
Since $M(r,f)=\max\limits_{||z||=r} |f(z)|\leq \max\limits_{z\in \mathbb{D}\langle 0;r\rangle}|f(z)|$, it follows from (\ref{pd7a}) that
$\log^+ M(r, f)\leq A(r, h)$. Now from Lemma \ref{l2.1} and (\ref{pd7}), we get
\begin{align*}
 T(r, f) \leq \log^+ M(r, f)\leq A(r, h) \leq M(\sqrt{m}r, h),
\end{align*}
from which we conclude that $\rho_1(f)\leq \rho(h)$.
Again by Lemma \ref{l2.1} (by taking $R=2\sqrt{m}r$), we have
\begin{align}\label{pd9} 
T(r, h) \leq \log^+ M(r, h)\leq \frac{1-\left(\frac{1}{2\sqrt{m}}\right)^2}{\left(1-\frac{1}{2\sqrt{m}}\right)^{2m}} T(2\sqrt{m}r, h).
\end{align}

Since $\max\limits_{z\in \mathbb{D}\langle 0;r\rangle}|f(z)|\leq \max\limits_{||z||=\sqrt{m}r} |f(z)|=M(\sqrt{m}r,f)$, it follows from (\ref{pd7a}) that $A(r, h)\leq \log^+ M(\sqrt{m}r, f)$.
Now using (\ref{pd9}) to Lemma \ref{t2.1}, we get
\begin{align*} M(r, h)<&\frac{2}{2\sqrt{m}-1}A(2\sqrt{m}r, h) +\frac{2\sqrt{m}+1}{2\sqrt{m}-1} |h(0)|\\\leq &\frac{2}{2\sqrt{m}-1}\log^+ M(2mr, f) + \frac{2\sqrt{m}+1}{2\sqrt{m}-1}|h(0)| \\
<& \frac{2}{2\sqrt{m}-1} \frac{1-\left(\frac{1}{2\sqrt{m}}\right)^2}{\left(1-\frac{1}{2\sqrt{m}}\right)^{2m}} T(4m\sqrt{m}r, f) +\frac{2\sqrt{m}+1}{2\sqrt{m}-1}|h(0)|,
\end{align*}
from which we conclude that $\rho(h)\leq \rho_1(f)$.
Finally, we have $\rho(h)=\rho_1(f)$.
\end{proof}

\begin{lem}\label{2L.1} Let $f$ be a non-constant entire function in $\mathbb{C}^m$ such that $\partial_{z_i}(f)\not\equiv 0$ for all $i\in\mathbb{Z}[1,m]$. Then 
\begin{align*}
\max\{\rho(\partial_{z_1}(f)),\ldots,\rho(\partial_{z_m}(f))\}=\rho(f).
\end{align*}
\end{lem}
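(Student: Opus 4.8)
The plan is to show the two inequalities $\max_i\rho(\partial_{z_i}(f))\le\rho(f)$ and $\rho(f)\le\max_i\rho(\partial_{z_i}(f))$ separately. The first direction is the easy one. For each fixed $i$, the lemma of the logarithmic derivative (Lemma \ref{L.1} with $I$ the $i$-th unit multi-index) gives $\parallel m(r,\partial_{z_i}(f)/f)\le\log^+T(r,f)+(1+\varepsilon)\log^+\log T(r,f)+O(1)$; since $f$ is entire, $\partial_{z_i}(f)$ is entire, so $T(r,\partial_{z_i}(f))=m(r,\partial_{z_i}(f))\le m(r,f)+m(r,\partial_{z_i}(f)/f)=T(r,f)+O(\log^+T(r,f))$ outside a set of finite measure. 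Taking $\limsup\log(\cdot)/\log r$ and noting that the exceptional set of finite measure does not affect the order, we obtain $\rho(\partial_{z_i}(f))\le\rho(f)$ for every $i$, hence $\max_i\rho(\partial_{z_i}(f))\le\rho(f)$.

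For the reverse inequality, set $\rho^*:=\max_i\rho(\partial_{z_i}(f))$ and assume $\rho^*<\infty$ (otherwise there is nothing to prove). The idea is to recover $f$ from its partial derivatives by integration and to control $M(r,f)$ by $M(\cdot,\partial_{z_i}(f))$ together with the growth of $f$ on a lower-dimensional slice. Concretely, fix $z=(z_1,\dots,z_m)$ with $\|z\|=r$ and write
\[
f(z_1,\dots,z_m)=f(0,\dots,0)+\sum_{i=1}^m\big(f(z_1,\dots,z_i,0,\dots,0)-f(z_1,\dots,z_{i-1},0,\dots,0)\big),
\]
where the $i$-th summand equals $\int_0^{z_i}\partial_{z_i}(f)(z_1,\dots,z_{i-1},\zeta,0,\dots,0)\,d\zeta$. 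Estimating each integral along the straight segment from $0$ to $z_i$ and using that the integrand is a holomorphic function of one variable bounded by $M(\sqrt{m}\,r,\partial_{z_i}(f))$ on the relevant polydisk, we get $|f(z)|\le|f(0)|+r\sum_{i=1}^m M(\sqrt m\,r,\partial_{z_i}(f))$, hence $\log^+M(r,f)\le \log^+\big(r\sum_i M(\sqrt m\,r,\partial_{z_i}(f))\big)+O(1)$. Since each $\log^+M(\cdot,\partial_{z_i}(f))$ has order $\le\rho^*$, the right-hand side has order $\le\rho^*$ as well, and by the characterization $\rho(f)=\limsup_{r\to\infty}\log^+\log^+M(r,f)/\log r$ derived from Lemma \ref{l2.1}, we conclude $\rho(f)\le\rho^*$.

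The main obstacle is making the integration argument clean in the several-variables setting: one must verify that $\partial_{z_i}(f)$, restricted to the slice $(z_1,\dots,z_{i-1},\zeta,0,\dots,0)$ with the first $i-1$ coordinates frozen, is a genuine holomorphic function of $\zeta$ on a disk of radius comparable to $r$, and that the maximum of its modulus over that slice is bounded by $M(cr,\partial_{z_i}(f))$ for a suitable constant $c$ depending only on $m$ (here one uses that the ball of radius $\sqrt m\,r$ contains every polydisk of polyradius $(r,\dots,r)$, so the relevant slice lies inside $\mathbb{C}^m[\sqrt m\,r]$). Once this bookkeeping is in place, the order estimates are routine: the factor $r$, the constant $\sqrt m$, and the finitely many summands all contribute only $O(\log r)$ or multiplicative constants inside the $\log^+\log^+$, none of which changes the $\limsup$. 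Combining the two inequalities yields $\max\{\rho(\partial_{z_1}(f)),\dots,\rho(\partial_{z_m}(f))\}=\rho(f)$, as desired.
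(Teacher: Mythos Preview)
Your proof is correct and follows essentially the same strategy as the paper: for the easy inequality the paper simply cites a proposition of Hu--Yang while you derive $\rho(\partial_{z_i}(f))\le\rho(f)$ from the logarithmic-derivative lemma, and for the reverse inequality the paper likewise integrates the partial derivatives to bound $M(r,f)$ by $\sqrt m\,r\sum_i M(r,\partial_{z_i}(f))+|f(c)|$, except that it integrates along the single straight segment $t\mapsto c+t(\tilde z-c)$ rather than your coordinate-axis telescoping path. Both routes yield the same maximum-modulus estimate, and the passage from that estimate to $\rho(f)\le\max_i\rho(\partial_{z_i}(f))$ is identical.
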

\begin{proof} First we suppose that $f(z)$ is a polynomial. Then $\partial_{z_1}(f(z)),\partial_{z_2}(f(z)),\ldots, \partial_{z_m}(f(z))$ are also polynomials. Since $T(r,f)=O(\log r)$ and $T(r,\partial_{z_i}(f))=O(\log r)$ for all $i\in\mathbb{Z}[1,m]$, it follows that $\rho(f)=0$ and $\rho(\partial_{z_i}(f))=0$ for all $i\in\mathbb{Z}[1,m]$. Therefore 
\begin{align*}
\max\{\rho(\partial_{z_1}(f)),\ldots,\rho(\partial_{z_m}(f))\}=\rho(f).
\end{align*}

Next we suppose that $f(z)$ is a transcendental entire function. Then by Proposition 3.3 \cite{hy1}, we have $\rho(\partial_{z_i}(f))\leq \rho(f)$ and so
\begin{align}\label{sm2} 
\max\{\rho(\partial_{z_1}(f)),\ldots,\rho(\partial_{z_m}(f))\}\leq \rho(f).
\end{align}

Let $\tilde z, c\in \ol{\mathbb{D}(0;\sqrt{m} r)}$, where $\tilde z=(\tilde z_1,\ldots,\tilde z_m)$ and $c=(c_1,\ldots,c_m)$. For fixed $\tilde z$, $c$, let $w(t)=c+t(\tilde z-c)$ for all $t\in [0,1]$. Let $F: [0,1]\to \mathbb{C}$ be defined by $F(t)=f(w(t))=f(c+t(\tilde z-c))$. Clearly $F(1)=f(\tilde z)$, $F(0)=f(c)$ and 
\begin{align}\label{sm4} 
F^{(1)}(t)=\frac{\partial F(t)}{\partial t}=\sideset{}{_{i=1}^m}{\sum} \frac{\partial F(t)}{\partial z_i} (\tilde{z}_i-c_i)=&\sideset{}{_{i=1}^m}{\sum} \frac{\partial f(c+t(\tilde z-c))}{\partial z_i}.(\tilde{z}_i-c_i)\\=&\sideset{}{_{i=1}^m}{\sum} \partial_{z_i}(f(c+t(\tilde z-c))).(\tilde{z}_i-c_i)\nonumber.
\end{align}

We know that $F(1)-F(0)=\int\limits_{0}^1 F^{(1)}(t)dt$ and so from (\ref{sm4}), we have
\begin{align}\label{sm4a} 
|f(\tilde z)-f(c)|\leq& \sideset{}{_{i=1}^m}{\sum}\int\limits_0^1\left|\partial_{z_i}(f(c+t(\tilde z-c))).(\tilde{z}_i-c_i)\right|dt\\\leq& 
\sqrt{m}r \sideset{}{_{i=1}^m}{\sum}\max\limits_{0\leq t\leq 1}\left|\partial_{z_i}(f(c+t(\tilde z-c)))\right|\nonumber \\\leq&
\sqrt{m}r \sideset{}{_{i=1}^m}{\sum}\max\limits_{\mathbb{C}^m[r]}\left|\partial_{z_i}(f(z)))\right|.\nonumber
\end{align}

Clearly (\ref{sm4a}) holds for all $\tilde z\in \ol{\mathbb{D}(0;\sqrt{m} r)}$ and so by the maximum modulus theorem, we get
\begin{align*}
 \max\limits_{\mathbb{C}^m[r]} |f(z)|\leq \sqrt{m}r \sideset{}{_{i=1}^m}{\sum}\max\limits_{\mathbb{C}^m[r]}\left|\partial_{z_i}(f(z)))\right|+|f(c)|,
 \end{align*}
i.e.,
\begin{align}\label{sm5} 
M(r,f)\leq \sqrt{m}r \sideset{}{_{i=1}^m}{\sum}M(r,\partial_{z_i}(f))+|f(c)|.
\end{align}

By the definition of order, for a given $\varepsilon>0$, there exists $R(\varepsilon)>0$ such that
\begin{align*}
M(r,\partial_{z_i}(f))<e^{r^{\rho(\partial_{z_i}(f))+\varepsilon}}\;\;\forall\;r>R(\varepsilon)
\end{align*}
for all $i\in\mathbb{Z}[1,m]$ and so from (\ref{sm5}), we get
\begin{align*}
 M(r,f)\leq 2\sqrt{m}m r e^{r^{d+\varepsilon}},
 \end{align*}
where $d=\max\{\rho(\partial_{z_1}(f)),\ldots,\rho(\partial_{z_m}(f))\}$. Consequently, $\rho(f)\leq d+\varepsilon$. Since $\varepsilon>0$ was arbitrary, it follows that $\rho(f)\leq \max\{\rho(\partial_{z_1}(f)),\ldots,\rho(\partial_{z_m}(f))\}$ and so from (\ref{sm2}), we have
\begin{align*}
\max\{\rho(\partial_{z_1}(f)),\ldots,\rho(\partial_{z_m}(f))\}=\rho(f).
\end{align*}
\end{proof}

\section {{\bf Proof of Theorem \ref{t3.1}}}
By the given condition, we have
\begin{align}\label{snn1} 
\partial_{z_i}(f(z))=e^{\alpha(z)}f(z)
\end{align}
for all $i\in\mathbb{Z}[1,m]$. Clearly $f(z)$ and $\partial_{z_i}(f(z))$ share $0$ CM for all $i\in\mathbb{Z}[1,m]$. Let $z_0$ be a zero of $f$ of multiplicity $p$. Then by the definition of zero, we can say that $z_0$ must be a zero of $\partial_{z_i}(f)$ of multiplicity atmost $p-1$ for atleast one $i\in\mathbb{Z}[1,m]$. Note that (\ref{snn1}) holds for all $i\in\mathbb{Z}[1,m]$. Therefore from (\ref{snn1}), we get a contradiction. Hence $f$ has no zeros and so from (\ref{snn1}), we see that $\partial_{z_i}(f(z))$ has no zeros for all $i\in\mathbb{Z}[1,m]$. Let $f(z)=e^{\beta(z)}$, where $\beta(z)$ is a non-constant entire function in $\mathbb{C}^m$. Now using Lemma \ref{t2.2}, we conclude that $\rho(\beta)=\rho_1(f)$ and so $\rho(\beta)\not\in\mathbb{N}\cup\{\infty\}$. Clearly $\partial_{z_i}(f(z))=\partial_{z_i}(\beta(z))e^{\beta(z)}$ and so 
\begin{align}\label{sn0}
\partial_{z_i}(f(z))=\partial_{z_i}(\beta(z))e^{\beta(z)}=\partial_{z_i}(\beta(z))f(z).
\end{align}

Since $\partial_{z_i}(f(z))$ has no zeros, $\partial_{z_i}(\beta(z))$ has no zeros for all $i\in\mathbb{Z}[1,m]$. 
Then there exist entire functions $\delta_1(z),\ldots, \delta_m(z)$ in $\mathbb{C}^m$ such that 
\begin{align}\label{sn1}
\partial_{z_i}(\beta(z))=e^{\delta_i(z)}
\end{align}
for $i\in\mathbb{Z}[1,m]$. Since $\rho(\partial_{z_i}(\beta))\leq \rho(\beta)<+\infty$, using Lemma \ref{2A}, we get from (\ref{sn1}) that $\delta_1(z),\ldots,\delta_m(z)$ are polynomials in $\mathbb{C}^m$ such that $\rho(\partial_{z_i}(\beta))=\deg(\delta_i)$
for all $i\in\mathbb{Z}[1,m]$. Again since $\rho(\partial_{z_i}(\beta))\leq \rho(\beta)$, using Lemma \ref{2L.1}, we have
\begin{align*}
\max\{\rho(\partial_{z_1}(\beta)),\ldots,\rho(\partial_{z_m}(\beta))\}= \rho(\beta)\not\in\mathbb{N}\cup\{\infty\}
\end{align*}
and so
\begin{align*}
\max\{\deg(\delta_1),\ldots,\deg(\delta_m)\}=\rho(\beta)\not\in\mathbb{N}\cup\{\infty\},
\end{align*}
from which we conclude that $\delta_1(z),\delta_2(z)\ldots,\delta_m(z)$ are constants. Consequently from (\ref{sn1}), we see that 
$\partial_{z_1}(\beta(z)), \partial_{z_2}(\beta(z)),\ldots, \partial_{z_m}(\beta(z))$ are also constants. Let
\begin{align}\label{sn.6} 
\partial_{z_i}(\beta(z))=A_i
\end{align}
for all $i\in\mathbb{Z}[1,m]$. Now from (\ref{snn1}), (\ref{sn0}) and (\ref{sn.6}), we see that $\alpha(z)$ reduces to a constant, say $c$ and $e^c=A_1=A_2=\ldots=A_m=A$. Clearly $\beta(z)$ has the Taylor expansion near $(0,0,\ldots,0)$,
\begin{align}\label{sn.7} 
\beta(z)=\sum\limits_{i_1,\ldots,i_m=0}^{\infty} a_{i_1\ldots i_m} z_1^{i_1}\ldots z_m^{i_m},
\end{align}
where the coefficient $a_{i_1\ldots i_m}$ is given by
\begin{align}\label{sn.8} 
a_{i_1\ldots i_m}=\frac{1}{i_1!\ldots i_m!}\frac{\partial^{|I|}\beta(0,0,\ldots,0)}{\partial z_1^{i_1}\cdots \partial z_m^{i_m}}.
\end{align}

Now using (\ref{sn.6}) and (\ref{sn.8}) to (\ref{sn.7}), we get $\beta(z)=B_0+A(z_1+ \cdots+z_m)$,
where $B_0=a_{0\ldots 0}=\beta(0,0,\ldots,0)$. Finally $f(z_1,\ldots ,z_m)=c_1\exp(A(z_1+ \cdots+z_m))$,
where $c_1=\exp(B_0)$.

\section {{\bf Proof of Theorem \ref{t3.2}}}
By the given conditions, we have $\parallel N(r,0;\partial_{z_i}(f))=o(T(r,f))$ and 
\bea\label{0x} \partial_{z_i}(f(z))-a=e^{\alpha(z)}(f(z)-a)\eea
for all $i\in\mathbb{Z}[1,m]$. Clearly $f$ and $\partial_{z_i}(f)$ share $a$ CM for all $i\in\mathbb{Z}[1,m]$. 
Now we consider the following two cases.\vspace{1.2mm}

{\bf Case 1.} Let $\alpha(z)$ be a constant. Suppose $e^{\alpha(z)}=A$. Then from (\ref{0x}), we have
\begin{align}\label{x1}
\partial_{z_i}(f(z))-a=A(f(z)-a)
\end{align}
for all $i\in\mathbb{Z}[1,m]$. Let $g(z)=f(z)-a$. Then $\partial_{z_i}(g)=\partial_{z_i}(f)$ and so from (\ref{x1}), we get
\begin{align}\label{x2} 
\partial_{z_i}(g(z))=A\left(g(z)+\frac{a}{A}\right)
\end{align}
for all $i\in\mathbb{Z}[1,m]$. Let $z_0$ be a zero of $g+\frac{a}{A}$ of multiplicity $p$. Then by the definition of zero, we can say that $z_0$ must be a zero of $\partial_{z_i}(g)$ of multiplicity atmost $p-1$ for atleast one $i\in\mathbb{Z}[1,m]$. Note that (\ref{x2}) holds for all $i\in\mathbb{Z}[1,m]$. Therefore from (\ref{x2}), we get a contradiction. Hence $g+\frac{a}{A}$ has no zeros.
Let us take 
\begin{align*}
g(z)+\frac{a}{A}=e^{\beta(z)},
\end{align*}
where $\beta(z)$ is an entire function in $\mathbb{C}^m$. Then from (\ref{x2}), we have 
$\partial_{z_i}(\beta(z))=A$,
for all $i\in\mathbb{Z}[1,m]$. Now proceeding in the same way as done in the proof of Theorem \ref{t3.1}, one can easily deduce that 
\begin{align*}
g(z)+\frac{a}{A}=c_1\exp(A(z_1+ \cdots+z_m)),
\end{align*}
where $c_1$ is a non-zero constant. Consequently
\begin{align*}
f(z)=c_1\exp(A(z_1+ \cdots+z_m))+a-\frac{a}{A}.
\end{align*}
\vspace{1.2mm}

{\bf Case 2.} Let $\alpha(z)$ be non-constant. Note that if $\partial_{z_i}^2(f)\equiv 0$ for all $i\in\mathbb{Z}[1,m]$, then by Lemma \ref{l2.5}, we conclude that that $f(z)$ is a polynomial in $\mathbb{C}^m$. In this case, from (\ref{x1}), we get a contradiction. Hence $\partial_{z_i}^2(f)\not\equiv 0$ for atlaest one $i\in\mathbb{Z}[1,m]$. Suppose $\partial_{z_k}^2(f)\not\equiv 0$. Since $\parallel N(r,0;\partial_{z_k}(f))=o(T(r,f))$, by Lemma \ref{L.6}, we deduce that
\begin{align}
\label{x0} \parallel\;N(r,0;\partial_{z_k}^2(f))=o(T(r,f)).
\end{align}

Now in view of (\ref{x0}) and using Lemma \ref{L.1}, we get
\begin{align}
\label{x00} \parallel\;T\left(r, \frac{\partial_{z_j z_i}^2(f)}{\partial_{z_i}(f)}\right)=o(T(r,f)),
\end{align}
for all $i,j\in\mathbb{Z}[1,m]$.
Let
\begin{align}
\label{yy0}F=\frac{\partial_{z_k}^2(f)}{\partial_{z_k}(f)}\;\;\text{and}\;\;G=\left(\frac{\partial_{z_k}(f)-a}{f-a}\right)^2.
\end{align}

We consider the following two sub-cases.\vspace{1.2mm}

{\bf Sub-case 2.2.} Let $F$ and $G$ be linearly independent. By Corollary 1.40 \cite{HLY}, there is $l \in \mathbb{Z}[1, m]$ such that 
\begin{align*}
W(F,G)=\left|\begin{array}{ll} F&G\\\partial_{z_l}(F)&\partial_{z_l}(G)\end{array}\right|\not\equiv 0.
\end{align*}

If we take $H=-\frac{W}{F G}$, then from (\ref{yy0}), we get
\begin{align}\label{yy1} 
H=\frac{\partial^3_{z_l z_k^2}(f)}{\partial_{z_k}^2(f)}-\frac{\partial^2_{z_lz_k}(f)}{\partial_{z_k}(f)}-2\left(\frac{\partial^2_{z_lz_k}(f)}{\partial_{z_k}(f)-a}-\frac{\partial_{z_l}(f)}{f-a}\right)\not\equiv 0,
\end{align}
where $\partial^3_{z_lz_k^2}(f(z))=\frac{\partial^3 f(z)}{\partial z_l\partial z_k^2}$ and $l,k\in\mathbb{Z}[1,m]$.

Let $z^0$ be a zero of $f-a$. By the given condition, we have $\parallel N(r,0;\partial_{z_i}(f))=o(T(r,f))$ for all $i\in\mathbb{Z}[1,m]$ and by (\ref{x0}), we have $\parallel\;N(r,0;\partial_{z_k}^2(f))=o(T(r,f))$. We assume that $\partial_{z_i}(f(z_0))\neq 0$ for all $i\in\mathbb{Z}[1,m]$ and $\partial^2_{z_k}(f(z_0))\neq 0$, otherwise the counting function of those zeros of $f-a$ which are the zeros of $\partial_{z_i}(f)$ for all $i\in\mathbb{Z}[1,m]$ and $\partial^2_{z_k}(f)$ is equal to $o(T(r,f))$.

If $z^0=(z^0_1,z^0_2,\ldots,z^0_m)$, then in a neighborhood of $z^0$, we can expand $f(z)-a$ as a convergent series of homogeneous polynomials in $z-z^0$:
\begin{align}\label{yy2} 
f(z)-a=\sideset{}{_{n=1}^{\infty}}{\sum}P_n(z-z^0).
\end{align}

Here $P_n$ is a homogeneous polynomial of degree $n$ and $P_1\not\equiv 0$. Since $f(z)$ and $\partial_{z_i}(f(z))$ share $a$ CM, from (\ref{yy2}), we get 
\begin{align}\label{yy2a}
\partial_{z_i}(P_1(z-z^0))=a
\end{align}
for all $i\in\mathbb{Z}[1,m]$ and so
\begin{align}\label{yy3} 
\partial_{z_i}(f(z))-a=\partial_{z_i}(P_2(z-z^0))+\partial_{z_i}(P_3(z-z^0))+\partial_{z_i}(P_4(z-z^0))+\ldots,
\end{align}
\begin{align}\label{yy4} 
\partial^2_{z_l z_k}(f(z))=\partial^2_{z_lz_k}(P_2(z-z^0))+\partial^2_{z_lz_k}(P_3(z-z^0))+\partial^2_{z_lz_k}(P_4(z-z^0))+\ldots,
\end{align}
\begin{align}
\label{yy5} \partial^2_{z_k}(f(z))=\partial^2_{z_k^2}(P_2(z-z^0))+\partial^2_{z_k^2}(P_3(z-z^0))+\partial^2_{z_k^2}(P_4(z-z^0))+\ldots
\end{align}
and 
\begin{align}\label{yy6} 
\partial^3_{z_lz_k^2}(f(z))=\partial^3_{z_lz_k^2}(P_3(z-z^0))+\partial^3_{z_lz_k^2}(P_4(z-z^0))+\ldots,
\end{align}
where $\partial^2_{z_k}(P_2(z-z^0))\neq 0$ and $\partial^3_{z_lz_k^3}(P_3(z-z^0))$ are constants. 
Let us take
\begin{align*}
e^{\alpha(z)}=c_0+Q_1(z-z^0)+Q_2(z-z^0)+\ldots,
\end{align*}
where $c_0$ is a non-zero constant and $Q_n$ is a homogeneous polynomial of degree $n$. Clearly from (\ref{0x}), we have
\begin{align}\label{yy8} 
\partial_{z_i}(f(z))-a=(c_0+Q_1(z-z^0)+Q_2(z-z^0)+\ldots)(f(z)-a),
\end{align}
for all $i\in\mathbb{Z}[1,m]$.
Now using (\ref{yy2}) and (\ref{yy3}) to (\ref{yy8}), we get
\begin{align}\label{yy9}
\partial_{z_i}(P_2(z-z^0))=c_0P_1(z-z^0),
\end{align}
and
\begin{align}\label{yy10} 
\partial_{z_i}(P_3(z-z^0))=c_0P_2(z-z^0)+P_1(z-z^0)Q_1(z-z^0),
\end{align}
for all $i\in\mathbb{Z}[1,m]$.
By the homogeneity of $P_3(z-z^0)$, we have
\begin{align*}
\sideset{}{_{i=1}^m}{\sum} (z_i-z_i^0)\partial_{z_i}(P_3(z-z^0))=3P_3(z-z^0)
\end{align*}
and so from (\ref{yy10}), we get
\begin{align}\label{yy10a}
m\partial_{z_j}(P_3(z-z^0))\sideset{}{_{i=1}^m}{\sum} (z_i-z_i^0)=3P_3(z-z^0),
\end{align}
for all $j\in\mathbb{Z}[1,m]$. Now from (\ref{yy10a}), we get
\begin{align}\label{yy10b} 
P_3(z-z^0)=d\left(\sideset{}{_{i=1}^m}{\sum} (z_i-z_i^0)\right)^3.
\end{align}
where $d$ is a non-zero constant. Clearly from (\ref{yy10b}), we have
\begin{align}\label{yy10c} 
\partial_{z_l z_k^2}^3(P_3(z-z^0))=\partial_{z_l}^3(P_3(z-z^0)).
\end{align}

Therefore using (\ref{yy2})-(\ref{yy6}), (\ref{yy10c}) to (\ref{yy10}), we get
\begin{align}\label{yy11} 
\partial^3_{z_l z_k^2}(f(z))\partial_{z_k}(f(z))-\partial^2_{z_lz_k}(f(z))\partial_{z_k}^2(f(z))=
2a^2\partial_{z_l}(Q_1(z-z^0))+\ldots
\end{align}

Again using (\ref{yy2})-(\ref{yy4}), (\ref{yy9}) and (\ref{yy10}), we have
\begin{align}\label{yy12} &(f(z)-a)\partial_{z_lz_k}^2(f(z))-\partial_{z_l}(f(z))(\partial_{z_k}(f(z))-a)\\=&P_1(z-z^0)\partial_{z_lz_k}^2(P_3(z-z^0))+ac_0P_2(z-z^0)-a\partial_{z_k}(P_3(z-z^0))\nonumber\\&-c_0P_1(z-z^0)\partial_{z_l}(P_2(z-z^0))+\ldots\nonumber\\=&
P_1^2(z-z^0)\partial_{z_l}(Q_1(z-z^0))+\ldots\nonumber
\end{align}

Now using (\ref{yy3}), (\ref{yy9}), (\ref{yy11}) and (\ref{yy12}) to (\ref{yy1}), we get  
$H(z^0)=0$ and so $H(z)$ is holomorphic at $z^0$. So $\parallel N(r,H)=o(T(r,f))$. Using Lemma \ref{L.1} to (\ref{yy1}), we get $\parallel m(r,H)=o(T(r,f))$ and so $\parallel T(r,H)=o(T(r,f))$. Now using the first main theorem, we get
\begin{align}\label{yy20} 
N(r,a;f)\leq N(r,0;H)\leq T(r,H)=o(T(r,f)).
\end{align}

Since $f$ and $\partial_{z_k}(f)$ share $a$ CM, using Lemma \ref{L.2}, we get
\begin{align*} \parallel T(r,\partial_{z_k}(f))\leq \ol N(r,0;\partial_{z_k}(f))+\ol N(r,a;\partial_{z_k}(f))+o(T(r,\partial_{z_k}(f)))=o(T(r,f))
\end{align*}
and so in view of the first main theorem and using Lemma \ref{L.1}, we have
\begin{align}\label{w0} 
m(r,a;f)\leq m(r,0;\partial_{z_k}(f))\leq T(r,\partial_{z_k}(f))=o(T(r,f)).
\end{align}

Finally view of (\ref{yy20}) and (\ref{w0}) and using the first main theorem, we get $\parallel T(r,f)=o(T(r,f))$, which is impossible.\vspace{1.2mm}

{\bf Sub-case 2.2.} Let $F$ and $G$ be linearly dependent. Then there exists $C\in\mathbb{C}\backslash \{0\}$ such that
\begin{align}\label{w1} 
C\frac{\partial_{z_k}^2(f)}{\partial_{z_k}(f)}=\left(\frac{\partial_{z_k}(f)-a}{f-a}\right)^2.
\end{align}

Now from (\ref{0x}), we get
\begin{align}\label{w2} 
\frac{\partial_{z_k}^2(f)}{\partial_{z_k}(f)}=\frac{\partial_{z_k}(e^{\alpha})(f-a)}{\partial_{z_k}(f)}+\frac{\partial_{z_k}(f)-a}{f-a}.
\end{align}

Let $z^0$ is a zero of $f-a$ such that $\partial_{z_k}(f(z_0))\neq 0$ and $\partial^2_{z_k}(f(z_0))\neq 0$. Then from (\ref{w1}) and (\ref{w2}), we get
\begin{align*}
\frac{\partial_{z_k}^2(f(z^0))}{\partial_{z_k}(f(z^0))}=C
\end{align*}
and so in view of (\ref{x00}) and using the first main theorem, we get
\begin{align}\label{w3}
N(r,a;f)\leq N\left(r,C;\frac{\partial_{z_k}^2(f)}{\partial_{z_k}(f)}\right)\leq T\left(r,\frac{\partial_{z_k}^2(f)}{\partial_{z_k}(f)}\right)=o(T(r,f)).
\end{align}

Now using (\ref{w0}), (\ref{w3}) and the first main theorem, we get $\parallel T(r,f)=o(T(r,f))$, which is absurd.

\medskip
{\bf Statements and declarations:}

\smallskip
\noindent \textbf {Conflict of interest:} The authors declare that there are no conflicts of interest regarding the publication of this paper.

\smallskip
\noindent{\bf Funding:} There is no funding received from any organizations for this research work.

\smallskip
\noindent \textbf {Data availability statement:}  Data sharing is not applicable to this article as no database were generated or analyzed during the current study.

\end{document}